\newtheorem{theorem}{Theorem}
\newtheorem{lemma}[]{Lemma}
\newtheorem{conjecture}{Conjecture}
\newcommand{\set}[1]{\ensuremath{\left\{#1 \right\}}}
\newcommand{\chis}[1]{\chi_{\textrm{$\ell$-f}}^\prime(#1)}
\newcommand{\ch}[1]{\textrm{ch}(#1)}
\newcommand{\chfin}[1]{\textrm{ch}^*(#1)}
\newcommand{\ngph}[1]{\ensuremath{M_G^2(#1)}}
\newcommand{\core}[1]{\ensuremath{\zeta(#1)}}
\renewcommand{\int}{{\rm int}}
\newcommand{\ext}{{\rm ext}}
\title{$\ell$-facial edge colorings of graphs}
\author
{	
	Borut Lu\v{z}ar\thanks{Institute of Mathematics, Physics and Mechanics, Ljubljana, Slovenia. Operation partially financed by the European Union, European Social Fund. E-Mail: \texttt{borut.luzar@gmail.com}},\quad
	Martina Mockov\v ciakov\'{a}\footnotemark[2], \\
	Roman Sot\'{a}k\thanks{Institute of Mathematics, Faculty of Science, Pavol Jozef \v Saf\'arik University, Ko\v sice, Slovakia. 
			Supported in part by Science and Technology Assistance Agency under the contract No. APVV-0023-10 (R.~Sot\'{a}k, P.~\v{S}ugerek),
			Slovak VEGA Grant No.~1/0652/12 (M.~Mockov\v{c}iakova, R. Sot\'ak), and VVGS UPJ\v{S} No.~59/12-13 (M.~Mockov\v{c}iakova, P.~\v{S}ugerek)
			E-Mails: \texttt{roman.sotak@upjs.sk, \{martina.mockovciakova, peter.sugerek\}@student.upjs.sk}},\quad
	Riste \v{S}krekovski\thanks{Department of Mathematics, University of Ljubljana, Ljubljana \& Faculty of Information Studies, Novo mesto, Slovenia.  
			Partially supported by ARRS Program P1-0383. E-Mail: \texttt{skrekovski@gmail.com}},\quad
	Peter \v{S}ugerek\footnotemark[2]
}
\begin{document}

\maketitle

\abstract{An \textit{$\ell$-facial edge coloring} of a plane graph is a coloring of the edges such that any two edges at distance at most
			$\ell$ on a boundary walk of some face receive distinct colors. It is conjectured that $3\, \ell + 1$ colors suffice for an
			$\ell$-facial edge coloring of any plane graph. We prove that $7$ colors suffice for a $2$-facial edge coloring of any plane
			graph and therefore confirm the conjecture for $\ell = 2$.}

\bigskip
{\noindent \textbf{Keywords:} $\ell$-facial edge coloring, facial coloring, discharging method}

\section{Introduction}

An $\ell$-facial coloring of a plane graph is a coloring of its vertices such that vertices at distance at most $\ell$ on a 
boundary walk of some face receive distinct colors. This type of colorings was 
introduced by Kr\'{a}\v{l}, Madaras, and \v{S}krekovski~\cite{KraMadSkr05,KraMadSkr07} as an extension 
of cyclic colorings in order to obtain some results on diagonal colorings. They showed that $\tfrac{18}{5}\ell$ colors suffice for an 
$\ell$-facial vertex coloring of any plane graph 
and any $\ell \ge 5$. Moreover, they proved that every plane graph admits a $2$-facial, $3$-facial, and $4$-facial coloring with
at most $8$, $12$, and $15$ colors, respectively. The obtained bounds are not tight, in fact, the following conjecture was proposed.
\begin{conjecture}[Kr\'{a}\v{l}, Madaras, and \v{S}krekovski]
	\label{conj:3l}
	Every plane graph admits an $\ell$-facial coloring with at most $3\,\ell + 1$ colors for every $\ell \ge 0$.
\end{conjecture}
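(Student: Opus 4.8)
The plan is to attack Conjecture~\ref{conj:3l} by the discharging method applied to a hypothetical minimal counterexample, organized so that the two accessible base cases anchor the argument. For $\ell = 0$ the bound $3\ell + 1 = 1$ is immediate, since facial distance $0$ imposes no constraint between distinct vertices. For $\ell = 1$ the bound is $4$, and a $1$-facial coloring is exactly a proper coloring (adjacent vertices lie at facial distance $1$ on a common face boundary, and conversely these are the only constraints), so the case $\ell = 1$ is precisely the Four Color Theorem. For $\ell \ge 2$ I would fix $\ell$, assume a counterexample $G$ minimal with respect to $|V(G)| + |E(G)|$, and derive a contradiction from the negative total charge $-12$ supplied by Euler's formula.

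First I would establish global structure: a minimal $G$ is connected, and after handling cut-vertices and bridges by splitting the graph and recombining partial colorings, it may be assumed $2$-connected, so that every face boundary is a cycle and facial distance along it is well defined. Next I would compile a catalogue of reducible configurations, namely small subgraphs that cannot appear in $G$. The prototype reduction deletes a vertex $v$ of small degree so that the smaller graph is colorable with $3\ell + 1$ colors; the number of colors forbidden at $v$ is bounded by the number of vertices within facial distance $\ell$ of $v$ along its incident faces, and one shows this count is smaller than $3\ell + 1$, so the coloring extends to $v$. The delicate point, already visible in the authors' treatment of $\ell = 2$, is that these forbidden sets run along face boundaries rather than through the ordinary neighborhood of $v$, so reducibility must be phrased in terms of facial paths of length up to $\ell$, and light vertices must be forced to lie on large faces or to have sparse facial surroundings before the extension argument closes.

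With a sufficient stock of reducible configurations in hand, the combinatorial core is the discharging phase: assign each vertex $u$ the initial charge $\deg(u) - 6$ and each face $f$ the charge $2|f| - 6$, so that the total charge is $-6(|V| - |E| + |F|) = -12$, and then design local redistribution rules that send charge from large faces and high-degree vertices to the light vertices and short faces. The aim is to prove that the absence of the reducible configurations forces every final charge to be nonnegative, contradicting the negative sum. I expect the main obstacle to be exactly the scaling in $\ell$: the discharging rules, the list of unavoidable configurations, and the tight bound $3\ell + 1$ must all be controlled uniformly in $\ell$, whereas facial neighborhoods grow linearly with $\ell$ and the reductions interact with long facial paths in ways that resist a tight, $\ell$-independent analysis. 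This is why the conjecture is confirmed here only in the $\ell = 2$ edge-coloring setting; a full proof of the general statement would require either a parametrized family of discharging arguments whose structure is uniform in $\ell$, or a genuinely new reduction that decouples the color count from the facial radius.
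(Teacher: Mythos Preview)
The statement you are addressing is an open conjecture; the paper does not prove it and does not claim to. What the paper actually proves is the much weaker Theorem~\ref{thm:main}, the \emph{edge}-coloring analogue for $\ell=2$ only. There is therefore no proof in the paper to compare your proposal against.

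On its own terms, your text is a programme, not a proof. You handle $\ell=0$ correctly and rightly identify $\ell=1$ with the Four Color Theorem (the paper notes the same implication). For $\ell \ge 2$ you outline the standard minimal-counterexample-plus-discharging scheme and then explicitly concede that you cannot close it: you exhibit no concrete reducible configuration with a completed reduction, you give no actual discharging rules, and you yourself flag the ``main obstacle'' of uniform control in $\ell$ as unresolved. That is an honest assessment of the difficulty, but it means there is no argument here to evaluate. Note in particular that even the first nontrivial vertex case $\ell=2$ remains open---the paper cites $8$ colors as the best known bound, against the conjectured $7$---so your outline would already have to succeed where the existing literature has not before any question of general $\ell$ arises.
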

Graphs that achieve the conjectured bound are plane embeddings of $K_4$, where the three edges incident to a same vertex
are subdivided $\ell - 1$ times. 

Conjecture~\ref{conj:3l}, if true, has several interesting implications. In case when $\ell = 1$, it implies the Four Color Theorem. If $\ell = 2$,
it implies Wegner's conjecture restricted to subcubic plane graphs~\cite{Weg77}, which states that the square of every subcubic plane graph admits a proper vertex
coloring with at most $7$ colors. 

Currently the best known bound for an $\ell$-facial coloring is due to Havet et al.~\cite{HavKraSerSkr10}.
\begin{theorem}[Havet et al.]
	\label{thm:vertbound}
	Every plane graph admits an $\ell$-facial coloring with at most $\big \lfloor \frac{7}{2} \,\ell \big \rfloor + 6$ colors\,.
\end{theorem}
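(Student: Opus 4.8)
The plan is to argue by contradiction. Let $G$ be a counterexample with the fewest vertices and set $k=\lfloor\tfrac72\ell\rfloor+6$. First I would carry out the usual simplifications, which for facial colourings need a little care: isolated and pendant vertices are removed (a pendant vertex lies on a single face and so has at most $2\ell<k$ vertices within facial distance $\ell$, hence can be coloured greedily at the very end), and a cutvertex is handled by colouring the blocks separately and then permuting colours so that the few pairs of vertices lying on a common face of $G$ end up coloured consistently; after this one may assume $G$ is a $2$-connected simple plane graph, so that every vertex has degree at least $2$ and every face has length at least $3$. It is convenient to view what we are colouring as the auxiliary graph $H$ on $V(G)$ in which $u$ and $v$ are adjacent whenever they lie within distance $\ell$ on the boundary walk of some common face; inside a face of length $n$ this relation is (a subgraph of) the $\ell$-th power $C_n^\ell$ of the cycle, which equals $K_n$ when $n\le 2\ell+1$ and has clique number $\ell+1$ otherwise. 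Thus short faces are the expensive objects, and the proof must control how the cliques coming from short faces overlap.

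Next I would assemble a list of reducible configurations. The template is standard: find a vertex $v$ — typically of small degree and lying on one or more short faces — together with a small set $S\ni v$ whose deletion leaves a smaller $2$-connected plane graph $G'$; colour $G'$ by minimality and then recolour the vertices of $S$. The point that lets one go past the naive greedy bound is that, although a vertex of $S$ may have many $\ell$-facial neighbours, most of those neighbours sit on a short face around $v$ and therefore contribute only a few distinct forbidden colours, so the number of colours actually unavailable is far below the degree in $H$. To carry out the extension when several vertices of $S$ must be coloured at once, I would set up the bipartite graph between the uncoloured vertices and the still-available colours and invoke Hall's theorem (in its defect form), which is exactly the mechanism indicated by the title of~\cite{HavKraSerSkr10}. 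I expect the reducible configurations to be local pictures such as: a short face incident with a $2$-vertex; two short faces sharing an edge or a vertex; a short face whose vertices all have bounded degree; and reducible chains alternating $2$-vertices and triangles — precisely the configurations that the discharging phase will need to be absent.

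Finally I would run a discharging argument. By Euler's formula, giving each vertex $v$ charge $d(v)-4$ and each face $f$ charge $d(f)-4$ produces total charge $-8$. One then moves charge from the objects with positive charge (large faces and large-degree vertices) to those with negative charge (short faces and $2$-vertices), with the transfer rates chosen so that, once all the reducible configurations above are forbidden, every vertex and every face ends with non-negative charge — contradicting the total $-8$. The main obstacle, and where essentially all of the work lies, is calibrating these rates: the threshold at which a configuration becomes reducible has to be driven down exactly to $k=\lfloor\tfrac72\ell\rfloor+6$, and since each short face of length $m\le 2\ell+1$ behaves like $K_m$, the binding constraints come from the worst possible overlap of several such cliques together with long chains of $2$-vertices; it is this extremal pattern that yields the coefficient $\tfrac72$ rather than $3$ (the conjectured value) or $\tfrac{18}{5}$ (the bound of Kr\'{a}\v{l}, Madaras, and \v{S}krekovski). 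The additive constant $+6$ should fall out of treating a handful of small graphs — those with too few vertices or faces for the discharging to gain traction — directly as base cases.
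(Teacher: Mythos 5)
This statement is not proved in the paper at all: it is Theorem~\ref{thm:vertbound}, quoted from Havet et al.~\cite{HavKraSerSkr10} as background, so there is no internal proof to compare yours against. Judged on its own, your text is a proof \emph{plan}, not a proof, and the gap is exactly the part that carries all the mathematical content of the statement. You never exhibit a single concrete reducible configuration, never write down a discharging rule, never verify that the absence of your (unspecified) configurations forces every vertex and face to end with nonnegative charge, and consequently nothing in the argument produces the specific value $\lfloor \tfrac{7}{2}\ell\rfloor+6$ rather than, say, $\tfrac{18}{5}\ell$ or $4\ell$. You acknowledge this yourself (``calibrating these rates\dots is where essentially all of the work lies''), but that calibration \emph{is} the theorem; a template of ``minimal counterexample, reducible configurations, Euler charge $d-4$, Hall's theorem for extensions'' applies verbatim to almost any facial-coloring bound and cannot certify this particular one. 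The claim that the additive constant $+6$ ``should fall out of base cases'' is also not how such constants arise in these arguments; it must come out of the reducibility/extension estimates, and asserting it without computation is another gap.

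Two of your preliminary reductions also need real arguments rather than a sentence. Handling a cutvertex by ``permuting colours so that the few pairs of vertices lying on a common face end up coloured consistently'' is not routine for general $\ell$: up to roughly $2\ell$ vertices on each side of the cutvertex can be within facial distance $\ell$ of vertices on the other side, so one must show a consistent permutation (or identification of colours) exists, which already for the easy case $\ell=2$ of the edge version in this paper (Lemma~\ref{lem:2conn}) requires a case analysis with $8$ boundary edges. Likewise, extending a colouring to a deleted set $S$ via Hall's theorem requires verifying the defect condition for the actual lists arising from short faces, which again depends on the concrete configurations you have not specified. So while your instinct to involve Hall's theorem matches the method advertised by the title of~\cite{HavKraSerSkr10}, the proposal as written does not establish the bound; for this paper's purposes the correct course is simply to cite~\cite{HavKraSerSkr10}, as the authors do.
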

There are also several results regarding the small values of $\ell$. In 2006, Montassier and Raspaud~\cite{MonRas06} considered $2$-facial colorings 
of plane graphs with big girth and $K_4$-minor free graphs. In 2008, Havet et al.~\cite{HavSerSkr08} proved that every plane graph admits a $3$-facial 
coloring with at most $11$ colors, which is just one color more as Conjecture~\ref{conj:3l} claims.

In this paper we consider the edge version of facial colorings. An \textit{$\ell$-facial edge coloring}, $\ell$-FEC, of a plane graph $G$ 
with $k$ colors is a mapping $\varphi \, : \, E(G) \rightarrow \set{1,2,\dots,k}$ such that for any pair of edges $e$, $f$ of $G$ at distance at 
most $\ell$ on a boundary of some face $\varphi(e) \neq \varphi(f)$. The minimum number of colors for which $G$ admits an 
$\ell$-facial edge coloring is the \textit{$\ell$-facial chromatic index}, $\chis{G}$. 

%zovseobecnenie do vacsej vzdialenosti na stenach....palindromfree....at workshop Hereditarnia in 2011 by S.~Jendro\v{l} (Hereditary properties of words and edge colourings of graphs)

Notice that all the upper bounds established for $\ell$-facial vertex colorings hold also for the edge version. Consider the medial graph $M(G)$ of a 
plane graph $G$, which is also a plane graph. An $\ell$-facial vertex coloring of $M(G)$ corresponds to an $\ell$-facial edge coloring of $G$. Thus,
the problem of $\ell$-facial edge coloring is just a restricted case of the problem of $\ell$-facial coloring. 
However, there exist graphs whose $\ell$-facial chromatic index achieves the $3\ell + 1$ bound (see Fig.~\ref{fig:3lconj}). 
\begin{figure}[htb]
	$$
		\includegraphics{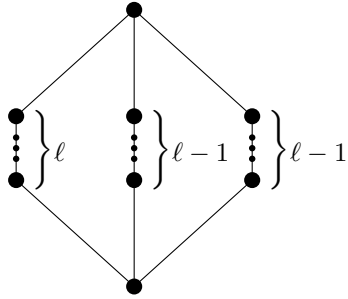}
	$$
	\caption{Graphs with the $\ell$-facial chromatic index equal to $3\,\ell + 1$.}
	\label{fig:3lconj}
\end{figure}
Therefore, a weaker version of Conjecture~\ref{conj:3l} may be proposed.
\begin{conjecture}
	\label{conj:3le}
	Every plane graph admits an $\ell$-facial edge coloring with at most $3\,\ell + 1$ colors for every $\ell \ge 1$.
\end{conjecture}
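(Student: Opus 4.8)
The plan is to prove Conjecture~\ref{conj:3le} by the \emph{discharging method} applied to a minimal counterexample, with a single argument uniform in $\ell$; the case $\ell=2$ settled in this paper is the model for the local analysis. Fix $\ell \ge 1$ and a palette of $3\ell+1$ colors, and suppose $G$ is a plane graph with $\chis{G} > 3\ell+1$ chosen so that $|E(G)|$ is minimum and, subject to that, $|V(G)|$ is minimum. First I would record the structural reductions forced by minimality: $G$ is connected and $2$-edge-connected, it contains no vertex of degree $1$ or $2$ (handled by deleting, respectively suppressing, and recoloring the bounded affected region), and it contains no edge $e$ whose number of \emph{conflicting} edges—those at facial distance at most $\ell$ from $e$ on some face—is at most $3\ell$, since such an $e$ could be removed, the rest colored by minimality, and $e$ then assigned a free color. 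The difficulty is thus concentrated where edges are crowded: near vertices of degree $3$ and near faces whose length is small relative to $\ell$.

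Second, I would build the library of \emph{reducible configurations}. The lower bound $3\ell+1$ is attained by the subdivided $K_4$ of Figure~\ref{fig:3lconj}, whose tight behaviour lives in cubic regions and around short faces; these are exactly the structures the library must control. For each candidate—short threads of degree-$3$ vertices, two incident short faces, a short face sharing an edge with a triangle, and so on—I would prove reducibility by deleting a bounded edge set $S$, coloring $G-S$ by minimality, and \emph{extending} the coloring to $S$. The extension is a system-of-distinct-representatives / Hall-type argument. Although a single edge may have nearly $4\ell$ conflicting neighbours, so that a crude greedy degree bound is too weak, the conflict sets of nearby edges overlap heavily in crowded regions; the crucial point is that the \emph{union} of colors forbidden to $S$ stays below $3\ell+1$, and the value $3\ell+1$ is calibrated precisely so that this Hall condition holds once the extremal cubic pattern is excluded.

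Third comes the discharging itself. I would work on $G$ directly with the standard charges $\ch{v}=\deg(v)-6$ and $\ch{f}=2\,|f|-6$, so that by Euler's formula the total charge is $-12$; the relevance of the constant is clearest through the medial graph $M(G)$, which is $4$-regular and on which the conflict structure naturally lives. I would then design rules sending charge from vertices and faces of large size toward the crowded degree-$3$ regions and short faces. Since every configuration in the library is reducible and hence absent from $G$, the remaining local structures must all finish with nonnegative charge, contradicting the negative total and proving that no counterexample exists.

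The main obstacle—and the reason the conjecture remains open beyond small $\ell$—is the \emph{uniformity in $\ell$} of the reducibility proofs. When a face has length at most $\ell$ its boundary wraps around, so one edge can conflict with another through several distinct short facial arcs at once; the size of the local conflict graph then ceases to grow tidily in $\ell$, and controlling these overlaps while simultaneously verifying the Hall condition for \emph{every} value of $\ell$, rather than only for $\ell=2$, is where the genuine work lies. A secondary difficulty is that both the list of reducible configurations and the discharging rules must be described parametrically in $\ell$, so that one finite argument yields the whole infinite family; producing such a parametric discharging scheme that closes for all $\ell$ is the crux.
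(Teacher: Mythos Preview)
The statement you are addressing is not proved in the paper at all: Conjecture~\ref{conj:3le} is stated as an \emph{open} conjecture, and the paper establishes only the case $\ell=2$ (Theorem~\ref{thm:main}). There is therefore no ``paper's own proof'' to compare against, and what you have written is a research outline, not a proof; you acknowledge as much in your final paragraph. So the honest verdict is that the proposal does not prove the conjecture and was not expected to.

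Beyond that, one of your structural reductions is genuinely wrong and would derail the plan even for fixed $\ell$. You assert that a minimal counterexample contains no vertex of degree $2$, to be ``handled by suppressing and recoloring the bounded affected region''. This fails: suppressing a $2$-vertex shortens two faces by one each, which \emph{changes} facial distances globally along those boundaries, so an $\ell$-facial edge coloring of the suppressed graph need not induce one of the original graph on the surviving edges. More tellingly, the extremal graphs in Figure~\ref{fig:3lconj} that realise $\chis{G}=3\ell+1$ are precisely subdivisions, hence full of $2$-vertices; and the paper's own $\ell=2$ argument does not eliminate $2$-vertices but devotes Lemmas~\ref{lem:72v}, \ref{lem:2verts}, and \ref{lem:82v} to controlling how they sit on faces. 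Any discharging scheme for general $\ell$ will have to do the same, and your choice of charges $\deg(v)-6$ and $2|f|-6$ (which makes every $2$-vertex carry charge $-4$ with no mechanism proposed to feed it) already reflects this gap.
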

As mentioned above, the case with $\ell =1$ is already confirmed. Our aim in this paper is to confirm that the case with $\ell = 2$ holds.
\begin{theorem}
	\label{thm:main}
	Every plane graph admits a $2$-facial edge coloring with at most $7$ colors.
\end{theorem}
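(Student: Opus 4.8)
The plan is to argue by contradiction with the discharging method. Suppose the theorem fails and let $G$ be a plane graph admitting no $2$-facial edge coloring with $7$ colors, chosen with $|V(G)|+|E(G)|$ minimum. By routine reductions we may assume $G$ is connected and indeed $2$-connected: a bridge or a cut-vertex decomposes $G$ into strictly smaller plane graphs whose $2$-facial edge colorings, obtained by minimality, interact only through the few edges lying at facial distance at most $2$ on the common face, so they can be made to agree after a permutation of colors. In particular every face of $G$ is bounded by a cycle. (It is sometimes cleaner to pass to the equivalent statement for vertices: properly color $V(M(G))$, where $M(G)$ is the $4$-regular medial graph, so that vertices at facial distance at most $2$ differ; note that the general bound of Theorem~\ref{thm:vertbound} gives only $13$ colors here, so a dedicated argument is needed.)

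The core of the proof is a list of reducible configurations that $G$ cannot contain. The basic local estimate is that every edge $e=uv$ conflicts with at most $8$ other edges: the (at most) two edges following $e$ around $u$ and the two around $v$, together with, on each of the two faces incident to $e$, one further edge reached via a neighbor of $u$ and one via a neighbor of $v$. This number falls below $7$ exactly when vertices of degree $2$ or faces of length $3$ or $4$ cluster near $e$. So I would establish statements of the type: no two vertices of degree $2$ are adjacent; no face of length $3$ or $4$ carries a vertex of degree $2$; a vertex of degree $2$ is not incident with two faces of length $3$ or $4$; and finer variants controlling how maximal threads of degree-$2$ vertices may attach to short faces and to vertices of degree $3$ and $4$. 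Each is proved by deleting the configuration cleanly --- all edges incident with a chosen vertex, or a whole maximal thread --- so that the restriction of any $2$-facial edge coloring of the smaller plane graph is still a valid partial coloring of $G$; minimality supplies such a coloring with $7$ colors, and the deleted edges are reinserted one at a time in an order in which each of them sees at most $6$ forbidden colors, a short exchange argument handling the borderline cases where the crude count gives $7$.

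For the discharging step, assign to each vertex $v$ the initial charge $\deg(v)-4$ and to each face $f$ the charge $\ell(f)-4$; by Euler's formula the total is $-8$. I would then set up local redistribution rules whereby vertices of degree at least $5$ and faces of length at least $5$ send charge to incident vertices of degree $2$ and to faces of length $3$ and $4$, the exact amounts tailored so that the structural restrictions above guarantee every object ends with nonnegative charge. Summing, the total charge is nonnegative, contradicting $-8$. This contradiction proves Theorem~\ref{thm:main}, settling Conjecture~\ref{conj:3le} for $\ell=2$.

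The step I expect to be hardest is the reducibility analysis around degree-$2$ vertices. Unlike the vertex version of the problem, two edges at facial distance $2$ need not share an endpoint, so the conflict set of an edge extends to grand-neighbors; to push it safely below $7$ one must treat a degree-$2$ vertex jointly with its incident edges and its two incident faces, and keep precise track of which colors survive as the deleted edges are put back. Balancing the discharging rules so that the configurations they force coincide with those on the reducible list is the other place where most of the effort goes.
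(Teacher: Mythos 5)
Your overall architecture (minimal counterexample, reducible configurations, discharging) is the same as the paper's, but the concrete plan has a gap that is fatal as stated: your list of reducible configurations is confined to degree-$2$ vertices and their interaction with short faces, and that is nowhere near enough to drive a discharging contradiction. With your charges $d(v)-4$ and $\ell(f)-4$, a cubic plane graph all of whose faces have length $4$ or $5$ (the cube, the dodecahedron, \dots) has every vertex at charge $-1$ and every face at charge $0$ or $+1$ coming only from $5$-faces in quantity too small to matter; in fact in the all-quadrilateral cubic case there is no positively charged element at all, so no redistribution rules can make all charges nonnegative. Hence a minimal counterexample that is cubic with only $4$- and $5$-faces is untouched by your configuration list, and your proof cannot close. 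The paper escapes exactly this trap: it normalizes charges so that $3$-vertices are donors ($5d(v)-14$, $2\ell(\alpha)-14$) and, more importantly, it proves reducibility of configurations involving $3$-vertices and short faces --- no $6$-faces (Lemma~\ref{lem:no6}), no $2$-vertex on a $7^-$-face (Lemma~\ref{lem:72v}), no adjacent $4$-faces (Lemma~\ref{lem:44}), degree conditions on the common edge of adjacent $4$/$5$- and $5$/$5$-faces (Lemmas~\ref{lem:45} and~\ref{lem:55}), and every $4$-face carrying a $4^+$-vertex (Lemma~\ref{lem:43v}). Some such family of lemmas is indispensable, and none of it appears in your outline.

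The second, related, gap is methodological: your reducibility technique --- delete the configuration cleanly, restrict the coloring, and reinsert edges one at a time each seeing at most $6$ forbidden colors, with ``a short exchange argument'' for borderline cases --- does not suffice for the configurations actually needed. For instance, for a $4$-face whose four vertices all have degree $3$ one must recolor eight edges, some with only three available colors and five mutual conflicts, so greedy reinsertion fails; and mere deletion of an edge or vertex generally destroys the facial structure in a way that does not hand you any slack. The paper's key device is different: it modifies $G$ by \emph{identifying} two boundary edges that are not $2$-facial neighbors, so that in the induced partial coloring of $G$ these two edges carry the same color, which gains an extra available color for every uncolored edge near them; the uncolored edges are then handled via their $2$-medial graph using degree-choosability (Theorem~\ref{thm:lstbrooks}, the Gallai-tree criterion) and the removal of free vertices, rather than by a greedy or ad hoc exchange. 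Without an idea of this strength for forcing color coincidences and for list-coloring the leftover edges, the hard cases you yourself flag (and the even harder ones you omit) remain open, so the proposal does not yet constitute a proof.
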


\subsection{Preliminaries}

In the paper the following definitions and notation are used. A vertex of degree $k$, at most $k$, and at least $k$ is called a \textit{$k$-vertex},
a \textit{$k^-$-vertex}, and a \textit{$k^+$-vertex}, respectively. Similarly, a \textit{$k$-face}, a \textit{$k^-$-face}, and 
a \textit{$k^+$-face} is a face of length $k$, at most $k$, and at least $k$, respectively.
By $(v_1,v_2,\dots,v_k)$ we denote a $k$-face on which the vertices $v_1,v_2,\dots,v_k$ appear on the boundary in the given order.
We say that two faces are adjacent if they share an edge. 
Let $V$ be some subset of vertices of a graph $G$. As usual, $G[V]$ is a subgraph of $G$ induced by the vertices of $V$.

For a given cycle $C$ in a plane embedding of a graph $G$ we define $\int(C)$ to be the graph induced by the vertices lying strictly in the interior of $C$.
Similarly, $\ext(C)$ is the graph induced by the vertices lying strictly in the exterior of $C$. A \textit{separating} cycle is a cycle $C$ such that 
both, $\int(C)$ and $\ext(C)$, contain at least one vertex.

Two edges are \textit{facially adjacent} or \textit{facial neighbors} if they are consecutive on the boundary of some face. 
An \textit{$\ell$-facial neighbor} of an edge is any edge at distance at most $\ell$ on 
the boundary of some face, hence, facially adjacent edges are $1$-facial neighbors. In a partial coloring, we say that a color $c$ is \textit{available} for 
an edge $e$, if there is no $2$-facial neighbor of $e$ colored by $c$. Let $H$ be a subset of edges of a graph $G$. A graph $\ngph{H}$ is a 
graph with the vertex set $H$, and two vertices $x$ and $y$ are adjacent 
in $\ngph{H}$ if they are $2$-facial neighbors in $G$, we call it the \textit{$2$-medial graph of $H$ in $G$}. Obviously, 
a proper vertex coloring of $\ngph{H}$ corresponds to $2$-FEC of the edges in $H$.

We say that $L$ is a \textit{list-assignment} for the graph $G$ if it assigns a list $L(v)$ of available colors to each vertex $v$
of $G$. If $G$ has a proper vertex coloring $c_l$ such that $c_l(v) \in L(v)$ for all vertices in $V(G)$, then $G$ is
\textit{$L$-colorable} or $c_l$ is an \textit{$L$-coloring} of $G$. The graph $G$ is \textit{$k$-choosable} if it is $L$-colorable for
every assignment $L$, where $|L(v)| \ge k$, for every $v \in V(G)$. In the sequel, we make use of the following result.
\begin{theorem}[\cite{Bor77, ErdRubTay80}]
   \label{thm:lstbrooks}
    Let $G$ be a connected graph. Suppose that $L$ is a list-assignment where $|L(v)| \ge d(v)$ for each $v \in V(G)$. If
      \begin{enumerate}
         \item  $|L(v)| > d(v)$ for some vertex $v$, or
         \item  $G$ contains a block which is neither a complete graph nor an induced odd cycle (i.e. $G$ is not a Gallai tree),
      \end{enumerate}
     then $G$ admits an $L$-coloring. 
\end{theorem}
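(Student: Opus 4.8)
The plan is to separate the easy ``slack'' mechanism from the structural heart of the statement, and to reduce the second hypothesis to the first wherever possible.

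First I would record the basic greedy tool, which is exactly the content of hypothesis~(1). Suppose $G$ is connected, $|L(v)| \ge d(v)$ for all $v$, and some vertex $v_0$ has $|L(v_0)| > d(v_0)$. Root a spanning tree of $G$ at $v_0$ and order the vertices by non-increasing distance from $v_0$, so that $v_0$ comes last and every other vertex is processed before its tree-parent. Colouring greedily in this order, each vertex $v \neq v_0$ has at least one still-uncoloured neighbour (its parent) at the moment it is coloured, so at most $d(v)-1 < |L(v)|$ colours are forbidden and a colour remains; finally $v_0$ sees at most $d(v_0) < |L(v_0)|$ forbidden colours. This proves the implication under hypothesis~(1), and it reduces the whole problem to the case $|L(v)| = d(v)$ for every $v$: if any list is strictly larger we are already done.

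So assume from now on that all lists are tight and that $G$ satisfies hypothesis~(2). The overall strategy is to pre-colour a very small set $S$ of at most two vertices so that, in the remaining graph $G - S$ (with the colours used on $S$ deleted from the lists of their neighbours), every list still has size at least the new degree while at least one vertex acquires strict slack; provided $G - S$ stays connected, a single application of the greedy tool finishes the colouring. I would split into two cases according to whether the lists are locally constant. If some edge $uz$ has $L(u) \neq L(z)$, then one of the two lists, say $L(u)$, contains a colour $a \notin L(z)$; colouring $u := a$ leaves $L(z)$ untouched while dropping $d(z)$ by one, so $z$ gains slack, every other neighbour of $u$ keeps list size at least its new degree, and the greedy tool colours $G - u$. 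If instead every edge joins vertices with equal lists, then $G$ being connected forces a single common list $L_0$ with $|L_0| = d(v)$ for all $v$; thus $G$ is $|L_0|$-regular and the task is precisely to properly colour $G$ with the $|L_0|$ colours of $L_0$, i.e.\ this case is exactly Brooks' theorem. Since $G$ is connected and, by hypothesis~(2), is not a Gallai tree, it is neither a complete graph nor an odd cycle, so Brooks' theorem applies.

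The heart of the matter, and the step I expect to be the main obstacle, is the structural lemma underpinning the regular (Brooks) case together with the connectivity bookkeeping present in both cases. For the Brooks case I would use that a $2$-connected graph which is neither complete nor a cycle contains two non-adjacent vertices $x, y$ with a common neighbour $w$ such that $G - x - y$ is connected; since $L(x) = L(y) = L_0$, one may colour $x$ and $y$ with the same colour, which frees a colour at $w$ (its degree drops by two but only one colour is forbidden), after which the greedy tool finishes. Establishing this lemma, treating even cycles (which are degree-choosable directly, being $2$-choosable) as a separate sub-case, and carrying out the reduction from a general connected graph to its $2$-connected blocks are where the real work lies: one must ensure that the at most two pre-coloured vertices are chosen non-separating in $G$ (for instance among the non-cut vertices of the relevant block, exploiting that a cut vertex of $G$ automatically carries extra degree and hence slack), so that $G$ minus the pre-coloured set remains connected and the single greedy pass goes through.
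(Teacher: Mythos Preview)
The paper does not prove this theorem; it is quoted from \cite{Bor77,ErdRubTay80} and used as a black box, so there is no in-paper argument to compare against. I will therefore just comment on your sketch.

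Your outline follows one of the standard proofs and the main ingredients are correct: the spanning-tree greedy argument disposes of hypothesis~(1) completely; under tight lists the dichotomy ``some edge carries unequal lists'' versus ``all lists coincide'' is the natural split; and in the regular case the structural lemma you cite (two non-adjacent neighbours $x,y$ of a common vertex $w$ whose removal keeps the graph connected) is exactly the Lov\'asz device for Brooks' theorem.

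Where the sketch is thin --- and you flag this yourself --- is the passage from $2$-connected $G$ to general $G$. Two specific points. First, in your Case~A the greedy step on $G-u$ needs $G-u$ connected, and nothing guarantees that an edge with unequal lists has a non-cut endpoint. Second, the parenthetical ``a cut vertex of $G$ automatically carries extra degree and hence slack'' is false as written: in $G$ itself a cut vertex $x$ has $|L(x)|=d_G(x)$ like every other vertex. What \emph{is} true, and is presumably what you mean, is that $x$ has slack relative to any single block $B$ containing it, since $d_B(x)<d_G(x)=|L(x)|$. That observation does drive a correct argument, but the clean way to run it is by induction on $|V(G)|$ rather than by one global greedy pass: choose a leaf block $B_1\neq B_0$ of the block tree with cut vertex $x$; greedily colour $B_1-x$ (which is connected, and in which every neighbour of $x$ has slack); delete from $L(x)$ the colours now appearing on its $B_1$-neighbours; and recurse on $G-(B_1\setminus\{x\})$, which is smaller, still contains the special block $B_0$, and still satisfies $|L'(v)|\ge d(v)$ everywhere. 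The base case $G=B_0$ is exactly your $2$-connected analysis. This closes the gap, but it is a genuine additional step beyond what your last paragraph sketches.
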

Notice that a vertex $v$ of a graph $G$ with $|L(v)| > d(v)$, we call it \textit{free}, retains at least one available color after all its 
neighbors are colored, therefore we may ignore it, i.e., consider the coloring of $G-v$. After a recursive removal of all free vertices from $G$,
we obtain a graph, which we call the \textit{core of $G$}, and denote by $\core{G}$. Observe that $G$ is $L$-colorable if and only if $\core{G}$ is $L$-colorable.
The \textit{null graph} is a graph on zero vertices. Every graph $G$ whose core is the null graph, is $L$-colorable.

\section{Proof of Theorem~\ref{thm:main}}

In the proof of Theorem~\ref{thm:main}, we assume that there exists a minimal counterexample $G$ to
the claim and show that it cannot exist by studying its structure. First, we show that certain configurations cannot occur in
the minimal counterexample. Then we assign charges to the vertices and faces of $G$. Using Euler's formula we compute that
the total charge is negative. However, by redistributing the charge among vertices and faces, we show that it is nonnegative,
obtaining a contradiction on the existence of $G$. This approach is the well known discharging
method which remains the only technique for proving the Four Color Theorem.

\subsection{Structure of the minimal counterexample}

In this part, we list several configurations that do not appear in the minimal counterexample $G$. All the proofs proceed
in a similar way, first we modify $G$ to obtain a smaller graph which, by the minimality of $G$, admits a $2$-FEC $\varphi$ with 
at most $7$ colors and then show that $\varphi$ can be extended to $G$. 

\begin{lemma}
   \label{lem:2conn}
	$G$ is 2-connected.
 \end{lemma}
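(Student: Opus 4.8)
The plan is to prove Lemma~\ref{lem:2conn} in two stages: first rule out a cut vertex, and before that handle connectivity and the trivial small cases. Since $G$ is a minimal counterexample, $G$ has at least two edges (a plane graph with at most one edge is trivially $2$-facial edge colorable with $7$ colors), so it suffices to derive a contradiction from the assumption that $G$ is not $2$-connected.

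\medskip

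\noindent\emph{Step 1: $G$ is connected.} If $G$ were disconnected, it would be the disjoint union of smaller plane graphs $G_1, G_2$. By minimality each $G_i$ admits a $2$-FEC with at most $7$ colors. We cannot simply take the union, because the embedding may place one component inside a face of the other, so that edges of $G_1$ and $G_2$ lie on a common face boundary and can be $2$-facial neighbors. The fix I would use is to re-embed: place $G_2$ entirely inside a face of $G_1$ that is bounded by a single edge or, more robustly, inside the outer face after choosing embeddings with each $G_i$ drawn so the relevant component sits in a face whose boundary walk meets only edges we control; then recolor one component's conflicting edges greedily. Concretely, a cleaner argument: add a new edge $e$ joining a vertex of $G_1$ to a vertex of $G_2$ drawn in a common face, obtaining a connected plane graph $G' = G + e$ that is still smaller than... no --- $G'$ has more edges. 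So instead I would argue directly on $G$: take $2$-FECs $\varphi_1,\varphi_2$ of the components; the only bad pairs are edges $f_1 \in G_1$, $f_2 \in G_2$ that are both on the boundary walk of a common face $F$ and within facial distance $2$. Only boundedly many edges of $G_2$ (those incident to the face containing $G_2$'s "attachment region") are involved, and we can permute the $7$ colors on $G_2$, or recolor those few edges, to avoid all conflicts, since the number of forbidden colors for each such edge is bounded and $7$ is generous. This is the routine part.

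\medskip

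\noindent\emph{Step 2: $G$ has no cut vertex.} Suppose $v$ is a cut vertex. Then $G = G_1 \cup G_2$ with $V(G_1)\cap V(G_2)=\{v\}$ and each $G_i$ a connected plane graph smaller than $G$ (inheriting the embedding). By minimality each $G_i$ has a $2$-FEC with at most $7$ colors. We want to combine them. The edges that can be $2$-facial neighbors across the two parts are precisely pairs that share the face in which $G_1$ and $G_2$ meet at $v$: an edge $e_1$ of $G_1$ at $v$ can be a $1$-facial neighbor of an edge $e_2$ of $G_2$ at $v$ (consecutive on a common face at $v$), and an edge at distance $1$ from $v$ in $G_1$ can be at facial distance $2$ from an edge of $G_2$ at $v$. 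In total, the set $S$ of edges of $G_2$ that have a $2$-facial neighbor in $G_1$ is contained in the set of edges incident to $v$ together with their facial neighbors inside $G_2$ --- a bounded, small set. For each edge in $S$ the number of already-used forbidden colors coming from $G_1$ is at most, say, a small constant; with $7$ colors available one can recolor the edges of $S$ one by one (or apply a suitable permutation of the palette to all of $G_2$ so that the colors around $v$ in $G_2$ avoid the colors around $v$ in $G_1$). Either way we extend to a $2$-FEC of $G$ with at most $7$ colors, contradicting that $G$ is a counterexample.

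\medskip

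\noindent The main obstacle is Step 2's bookkeeping: one must be careful that at a cut vertex $v$ the two parts may share more than one face (if $v$ appears several times on a face boundary), and that the facial-distance-$2$ relation reaches one edge \emph{beyond} $v$ into each part, so the conflict set $S$ and the set of forbidding colors must be counted correctly to be sure $7$ colors always suffice. I expect the authors to do exactly this: choose an edge $e$ of $G_2$ incident to $v$ lying on the shared face, delete or contract something to reduce, or simply argue that $|S|$ is small and each edge of $S$ sees at most a few colors from $G_1$, so a greedy fix within the palette of size $7$ works. Because $7$ is comfortably larger than the degree of the conflict graph $M_G^2(S)$ in these local configurations, no delicate choosability input (Theorem~\ref{thm:lstbrooks}) should be needed here; that machinery is presumably reserved for the harder reducible configurations later.
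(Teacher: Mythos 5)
Your high-level plan coincides with the paper's: split $G$ at the cut vertex $v$ into two proper subgraphs meeting only at $v$, color each by minimality, and then permute one palette to kill the cross-conflicts. But the proof has a genuine gap exactly where you defer to "bookkeeping": you never carry out the count that makes $7$ colors suffice, and the tight case is the whole content of the lemma. Since the edges of each part incident to $v$ occupy a contiguous arc in the rotation at $v$, exactly two faces carry edges of both parts, and on each such face the cross-adjacencies involve the two nearest edges on either side of $v$; so each part contributes at most $4$ conflict edges, $8$ in total. If the total is at most $7$ one can permute the palette of $\varphi_2$ so that all conflict edges get distinct colors. If it is exactly $8$, no permutation can make all of them distinct with $7$ colors, and one must observe that some edge $e_i$ of one part and some $f_j$ of the other are \emph{not} $2$-facially adjacent and can safely share a color. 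Your write-up never confronts this $8>7$ case, and your blanket assertion that "$7$ is generous" is false precisely there.

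Your proposed fallback of recoloring the edges of $S$ "one by one" also fails: an edge of $G_2$ in $S$ must avoid not only the few colors coming from $G_1$ but also the colors of all its already-colored $2$-facial neighbors inside $G_2$, of which there can be up to $8$; greedy recoloring with $7$ colors is therefore not available, and the global permutation of $\varphi_2$ (which preserves its validity on $G_2$) is essential, not optional. Finally, Step 1 is a detour you can delete: two edges in different components of a plane graph never lie on a common boundary \emph{walk} of a face, so they are never $\ell$-facial neighbors and the union of the component colorings is already a $2$-FEC; no re-embedding or recoloring is needed there.
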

 \begin{proof}
	Suppose, for a contradiction, that $x$ is a cutvertex of $G$. Let $G_1$ be a component of $G - x$ and $H_2 = G \setminus G_1$.
	Let $H_1 = G[V(G_1) \cup \set{x}]$, and let $\varphi_1$ and $\varphi_2$ be $2$-FECs of $H_1$ and $H_2$ with at most $7$ colors, respectively.		
	There are at most $4$ edges $e_i$, $i \le 4$, in $H_1$ that are $2$-facially adjacent to some edges in $H_2$, 
	and similarly at most $4$ edges $f_j$, $j \le 4$, in $H_2$ that are $2$-facially adjacent to some edges in $H_1$. In case when the sum $i + j = k$
	is at most $7$, it is easy to see that there exists a permutation of colors in $\varphi_2$ such that all $k$ edges receive different colors,
	and so the colorings $\varphi_1$ and $\varphi_2$ induce a $2$-FEC of $G$ with at most $7$ colors.
	
	Otherwise $k = 8$ and there exist $i$ and $j$ such that the two edges $e_i$ and $f_j$ are not $2$-facially adjacent. Thus, we can permute the colors
	in $\varphi_2$ such that $e_i$ and $f_j$ are colored with the same color and color the other $6$ edges with different colors. Again, 
	$\varphi_1$ and $\varphi_2$ induce a $2$-FEC of $G$ with at most $7$ colors.
\end{proof}

\begin{lemma}
	\label{lem:basic}
	For $G$ it holds that:
	\begin{enumerate}
		\item[$(i)$] $\delta(G) \ge 2$;
		\item[$(ii)$] every $2$-vertex has two $3^+$-neighbors;
		\item[$(iii)$] every face in $G$ has size at least $4$;
		\item[$(iv)$] there is no separating cycle of length at most $5$.
	\end{enumerate}
\end{lemma}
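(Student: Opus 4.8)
The plan is to prove each of the four parts of Lemma~\ref{lem:basic} by the standard minimal-counterexample reduction: assume $G$ contains the forbidden configuration, build a smaller plane graph $G'$, invoke minimality to get a $2$-FEC $\varphi$ of $G'$ with at most $7$ colors, and extend $\varphi$ to $G$ by counting the $2$-facial neighbors of the few uncolored edges. The key quantitative fact throughout is that an edge $uv$ on a $4^+$-face has a bounded number of $2$-facial neighbors, so whenever that bound is at most $6$ the edge can be colored greedily. Let me sketch the four reductions.

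For $(i)$, if $\delta(G)=1$, say $v$ is a $1$-vertex on edge $e=uv$, delete $e$ (and the isolated $v$), $2$-FEC the rest, and observe that $e$ lies on a single face, so it has at most four facial neighbors within distance $2$ along that face's boundary walk; four forbidden colors leave a free color for $e$. (One must be a little careful that the boundary walk may pass through $u$ more than once, but the count of distance-$\le 2$ edges from $e$ on any boundary walk is still at most $4$.) For $(ii)$, suppose a $2$-vertex $v$ has a $2$-neighbor $u$; then contract/suppress the path through these vertices (or delete one of the two incident edges and recolor), reducing to a smaller graph; the suppressed edges again sit on few faces and have few $2$-facial neighbors, giving enough room. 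For $(iii)$, a face of size $2$ or $3$ is handled similarly: a $2$-face means a multi-edge, so delete one copy; a triangle $(xyz)$ — here I would delete one edge of the triangle, say $xy$, $2$-FEC $G-xy$, and then check that $xy$ has at most $6$ forbidden colors among its $2$-facial neighbors, using that the other face incident to $xy$ has size $\ge 4$ by the case already done, and that the triangle contributes only a bounded number of nearby edges. For $(iv)$, if $C$ is a separating cycle of length $\le 5$, split $G$ along $C$ into $G_1 = \int(C)\cup C$ and $G_2 = \ext(C)\cup C$, colour each by minimality, and then reconcile the two colourings on the $\le 5$ edges of $C$ together with the boundedly many edges just inside and just outside $C$ — this is exactly the permutation-of-colours argument used in the proof of Lemma~\ref{lem:2conn}, now with $C$ playing the role of the cutvertex.

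The main obstacle I expect is part $(iv)$, the separating-cycle reduction, and within $(i)$--$(iii)$ the bookkeeping of exactly which edges are $2$-facial neighbors of an uncolored edge. For $(iv)$ one has to argue that after independently $2$-colouring $G_1$ and $G_2$, the conflicts occur only among a set of at most $7$ edges near $C$ (the edges of $C$ plus at most a couple of ``pendant'' edges on each side at each vertex of $C$), and that the structure of their $2$-medial graph $M_G^2$ is not a Gallai tree, so Theorem~\ref{thm:lstbrooks} (list-Brooks) yields the needed recolouring; alternatively one shows directly, as in Lemma~\ref{lem:2conn}, that a permutation of the colour set on one side suffices because two of the relevant edges are never $2$-facially adjacent. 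A subtlety is that when $C$ is short (length $4$ or $5$) the two faces of the split graphs bounded by $C$ are $4^+$-faces only because of part $(iii)$, so parts $(i)$--$(iii)$ must be proved first and used here.

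I would therefore order the work as: prove $(i)$, then $(ii)$ (using $(i)$), then $(iii)$ (using $(i)$ and $(ii)$), then $(iv)$ (using $(iii)$). In each of the first three, after removing an edge or suppressing a $2$-vertex, the recolouring is a short case analysis on the number of distinct colours appearing among the $\le 4$ (for $(i)$, $(ii)$) or $\le 6$ (for $(iii)$) relevant $2$-facial neighbours; since $7$ colours are available, a free colour always remains, completing the contradiction with minimality of $G$.
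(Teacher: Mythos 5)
Your overall strategy (local reduction, minimality, greedy or list-type extension) is the paper's, and parts $(i)$ and $(iv)$ essentially go through: for $(i)$ the paper simply invokes Lemma~\ref{lem:2conn}, and for $(iv)$ your splitting along $C$ is exactly the paper's argument, except that you make it harder than it is --- no face of $G$ has boundary edges both strictly inside and strictly outside $C$, so the only reconciliation needed is on the $\le 5$ edges of $C$ itself, which are rainbow in both colourings (being pairwise at facial distance $\le 2$ on the face they bound), so a single permutation of the colour set suffices; no list-Brooks argument and no ``pendant edges near $C$'' enter. Also $(iv)$ does not actually depend on $(iii)$: the two pieces are coloured by minimality of $G$ regardless of the length of the face bounded by $C$.

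The genuine gap is in your edge-deletion reductions for $(ii)$ and $(iii)$. Deleting an edge merges the two faces incident to it and can therefore \emph{increase} the facial distance between surviving, already-coloured edges, so the colouring of $G'$ need not induce a valid partial $2$-FEC of $G$. Concretely, in your triangle reduction let $\gamma=(\dots,a,x,y,b,\dots)$ be the second face at $xy$. In $G$ the edges $ax$ and $yb$ are at facial distance $2$ on $\gamma$ (via $xy$), but in $G-xy$ the merged face reads $\dots,a,x,z,y,b,\dots$, so $ax$ and $yb$ are at distance $3$ and may receive the same colour; this conflict cannot be repaired by colouring $xy$ alone. The same defect kills the ``delete one of the two incident edges'' variant of $(ii)$: if $x,u,v,w$ are consecutive with $d(u)=d(v)=2$, then $xu$ and $vw$ are at distance $2$ in $G$ but far apart once $uv$ is deleted. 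The paper avoids this systematically: for $(ii)$ it deletes the whole vertex $v$ (so every lost constraint involves an uncoloured edge), and for a $3$-face it deletes all three boundary edges \emph{and identifies} $u,v,w$ into a single vertex, which only shrinks facial distances among the surviving edges; each of $uv$, $vw$, $uw$ then has at most $4$ coloured $2$-facial neighbours and the three mutually adjacent uncoloured edges are coloured greedily. Your neighbour counts are correct; what needs repair is the validity of the induced partial colouring, and that is exactly what the contraction/identification trick buys.
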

\begin{proof}
	We consider each case separately.
	\begin{enumerate}
		\item[$(i)$] This is a simple corollary of Lemma~\ref{lem:2conn}.		
		\item[$(ii)$] Suppose that $uv$ is an edge of $G$ with $d(u) = d(v) = 2$. By the minimality, there is a $2$-FEC with at most $7$ colors of $G-v$.
			Let $w$ be the second neighbor of $v$ in $G$. The edges $vw$ and $uv$ have at most $5$ colored $2$-facial neighbors, therefore we can color both, 
			a contradiction.
		\item[$(iii)$] Suppose that $\alpha$ is a face of $G$ of size $i \le 3$. If $i \in \set{1,2}$, $\alpha$ contains a loop $uu$ or two paralel edges 
			$uv^1$, $uv^2$. By the minimality, there is a $2$-FEC with at most $7$ colors of $G-xy$, $xy\in\{uu,uv^2\}$. The edge $uu$ has at most 
			$4$ and $uv^2$ at most $5$ $2$-facial neighbors, so the coloring can be extended to $G$ again.

 			For $i=3$, let $\alpha = (u,v,w)$. Let $G'$ be a graph obtained from $G$ by removing the edges on the boundary of $\alpha$ and identifying its vertices. 
 			Let $\varphi$ be a $2$-FEC of $G'$ with at most $7$ colors. In order to extend $\varphi$ to $G$, it remains to color the edges $uv$, $vw$,
 			and $uw$. Each of them has at most $4$ colored $2$-facial neighbors and thus at least three available colors, so we can color them.
 		\item[$(iv)$] Suppose that $C$ is a separating cycle of length at most $5$. By the minimality, there exist a $2$-FEC $\varphi_1$ of $\int(C)$ together
 			with the edges of $C$, and a $2$-FEC $\varphi_2$ of $\ext(C)$ together with the edges of $C$. Since the length of $C$ is at most $5$, all the edges
 			of $C$ are colored differently. Notice that, by a permutation of colors in $\varphi_2$ such that the colors of the edges of $C$ coincide in
 			$\varphi_1$ and $\varphi_2$, we obtain a $2$-FEC of $G$.
	\end{enumerate} 
\end{proof}

\begin{lemma}
	\label{lem:no6}
	There are no $6$-faces in $G$.
\end{lemma}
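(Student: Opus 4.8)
The plan is the standard reducibility argument. Suppose $\alpha=(v_1,v_2,\dots,v_6)$ is a $6$-face of the minimal counterexample $G$. Since $G$ is $2$-connected (Lemma~\ref{lem:2conn}) and has no $3^-$-face (Lemma~\ref{lem:basic}$(iii)$), the boundary of $\alpha$ is a genuine $6$-cycle, so the vertices $v_1,\dots,v_6$ and the edges $e_i=v_iv_{i+1}$ (indices modulo $6$) are pairwise distinct. On $\alpha$ each $e_i$ is at facial distance at most $2$ from every $e_j$ with $j\notin\{i,i+3\}$, so the conflict pattern that $\alpha$ imposes on $e_1,\dots,e_6$ is exactly the cocktail-party graph $K_{2,2,2}$, with antipodal pairs $\{e_1,e_4\}$, $\{e_2,e_5\}$ and $\{e_3,e_6\}$. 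This is what sets a $6$-face apart from a $4$- or $5$-face: the edge-conflict graph of a $4$-face is $K_4$ and of a $5$-face is $K_5$, whereas for a $6$-face it is the incomplete, and --- crucially --- $3$-choosable graph $K_{2,2,2}$.

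Accordingly, the reduction I would use is to contract the whole boundary of $\alpha$ to a single vertex: let $G'$ be obtained from $G$ by identifying $v_1,\dots,v_6$ and deleting the loops that arise. These loops are exactly $e_1,\dots,e_6$: a chord $v_iv_{i+2}$ is impossible, since it would form a triangle, and a long diagonal $v_iv_{i+3}$ would, by Lemma~\ref{lem:basic}$(iv)$, force $G$ to be just the $6$-cycle with that one diagonal added --- a graph admitting a $2$-FEC with $4$ colours, contradicting that $G$ is a counterexample. Hence $G'$ is a plane multigraph with strictly fewer edges and vertices than $G$, so by minimality it has a $2$-FEC $\varphi$ with at most $7$ colours. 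Contracting a boundary edge of a face only shortens that face and merges no two faces, so passing from $G$ to $G'$ never increases a facial distance; consequently any two edges of $G$ within facial distance $2$ are within facial distance $2$ in $G'$ as well, and therefore get distinct colours under $\varphi$. So $\varphi$, read on $E(G)\setminus\{e_1,\dots,e_6\}$, is a legal partial $2$-FEC of $G$, and only $e_1,\dots,e_6$ remain to be coloured.

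The four $2$-facial neighbours of $e_i$ on $\alpha$ all lie in $\{e_1,\dots,e_6\}$ and are still uncoloured, so the only \emph{coloured} $2$-facial neighbours of $e_i$ are the at most four edges near $e_i$ on the second face incident to it; hence $e_i$ retains a list $L(e_i)$ with $|L(e_i)|\ge 3$. Moreover $M_G^2(\{e_1,\dots,e_6\})$ equals $K_{2,2,2}$: an extra edge would make two antipodal edges $e_i,e_{i+3}$ $2$-facially adjacent through a common face $\beta$, and inspecting the short arc of $\beta$ between them one finds it must yield a chord of $\alpha$ that creates a triangle, or a separating cycle of length at most $5$ --- both excluded by Lemma~\ref{lem:basic}$(iii)$--$(iv)$. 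Thus colouring $e_1,\dots,e_6$ is an $L$-colouring of $K_{2,2,2}$ with every list of size at least $3$, which is always possible: if two lists within one part of $K_{2,2,2}$ intersect, use a common colour there and $L$-colour the remaining $4$-cycle (which has all lists of size at least $2$, hence is $L$-colourable by Theorem~\ref{thm:lstbrooks}); otherwise a short counting argument produces a colour for one part that keeps each of the other four lists of size at least $2$, and Theorem~\ref{thm:lstbrooks} again finishes the $4$-cycle. This extends $\varphi$ to $G$, a contradiction. The part I expect to be the real work is confirming that $M_G^2(\{e_1,\dots,e_6\})$ is exactly $K_{2,2,2}$ --- that is, enumerating the ways two edges of $\alpha$ could be $2$-facially adjacent off $\alpha$ (through a short common face or a chord) and ruling each out with Lemma~\ref{lem:basic} --- together with the elementary verification that $K_{2,2,2}$ is $3$-choosable.
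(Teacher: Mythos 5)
Your proof is correct, but it follows a genuinely different reduction from the paper's. The paper does not contract the whole boundary: it identifies the single pair of opposite edges $v_1v_6$ and $v_3v_4$ (after checking, exactly as you do for one antipodal pair, that they are not $2$-facial neighbors), so that in the coloring of $G'$ these two edges automatically receive the same color; then only the four edges $v_1v_2$, $v_2v_3$, $v_4v_5$, $v_5v_6$ need to be (re)colored, each with at least $2$ available colors, and their $2$-medial graph is a $4$-cycle, which is $L$-colorable directly by Theorem~\ref{thm:lstbrooks}. Your contraction of the entire hexagon instead leaves all six boundary edges uncolored with lists of size at least $3$ and conflict graph $K_{2,2,2}$, which forces you to (a) rule out all three antipodal $2$-facial adjacencies rather than just one (the same Lemma~\ref{lem:basic} argument, repeated; note that excluding a chordal triangle really uses $(iii)$ and $(iv)$ together, since a triangle is either separating or bounds a region all of whose faces have length at most $3$ --- you cite both, so this is fine), and (b) prove $3$-choosability of $K_{2,2,2}$ by hand, since Theorem~\ref{thm:lstbrooks} does not apply when lists are smaller than the degrees; your sketch does close: if the lists in every part are disjoint, an antipodal pair offers at least $3\times 3=9$ color pairs and each of the other four vertices forbids at most $2$ of them, after which the remaining $C_4$ has lists of size at least $2$. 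Your other claims check out as well: the restriction of a $2$-FEC of the contracted multigraph is a legal partial $2$-FEC of $G$ because boundary walks only shrink, each $e_i$ has at most four colored $2$-facial neighbors (all on its second face), and chords of the hexagon are excluded as you say. What the paper's route buys is economy --- one non-adjacency check and a list argument that is literally the quoted theorem; what yours buys is a single symmetric reduction that colors the whole face at once, at the price of an ad hoc Erd\H{o}s--Rubin--Taylor-type choosability argument for the octahedron.
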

\begin{proof}
	Suppose, for a contradiction, that $\alpha = (v_1,v_2,v_3,v_4,v_5,v_6)$ is a $6$-face of $G$. By Lemma~\ref{lem:2conn}, we have that all the six vertices 
	are distinct. Let $G'$ be the graph obtained from $G$ by identifying 
	the two edges $v_1v_6$ and $v_3v_4$ such that the vertex $v_1$ goes to $v_3$, and $v_4$ goes to $v_6$ (see Fig.~\ref{fig:lem6face}) 	
	\begin{figure}[htb]
		$$
			\includegraphics{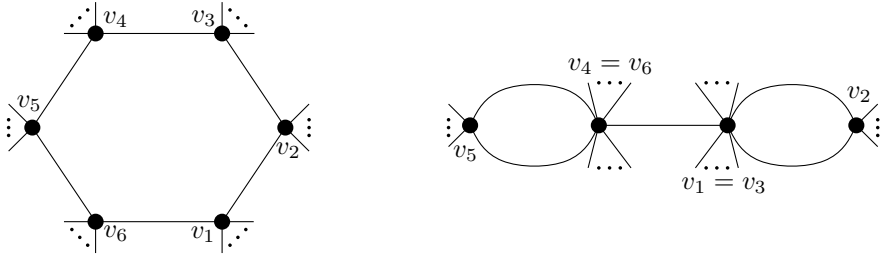}
		$$
		\caption{The $6$-face $\alpha$ in $G$ (left) and in $G'$ (right).}
		\label{fig:lem6face}
	\end{figure}
	By the minimality, there exists a $2$-FEC $\varphi$ of $G'$ with at most $7$ colors. Let $\psi$ be the partial $2$-FEC of $G$
	induced by $\varphi$, where the edges $v_1v_2$, $v_2v_3$, $v_4v_5$, and $v_5v_6$ remain noncolored, since their $2$-facial neighborhoods in $G'$ are 
	different as in $G$. Notice that $v_1v_6$ and $v_3v_4$ are not at facial distance $2$, for otherwise we have either a separating cycle of length at most $5$
	or adjacent $2$-vertices in $G$, which are reducible by Lemma~\ref{lem:basic}. In order to complete $\psi$, we color them as follows. 
	
	Since the edges $v_1v_6$ and $v_3v_4$ receive the same color, each of the four noncolored edges has at most $5$ forbidden colors. Hence, there are 
	at least $2$ available colors for each of them. Let $H$ be the set of the edges $v_1v_2$, $v_2v_3$, $v_4v_5$, and $v_5v_6$. The graph 
	$\ngph{H}$ is isomorphic to a $4$-cycle, which is $2$-choosable by Theorem~\ref{thm:lstbrooks}. Therefore, we can color the four edges to obtain 
	a $2$-FEC of $G$, a contradiction.	
\end{proof}

In the following lemmas we consider the appearance of $2$-vertices in $G$.

\begin{lemma}
	\label{lem:72v}
	A $7^-$-face in $G$ is not incident to a $2$-vertex.
\end{lemma}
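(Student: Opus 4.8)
The plan is to argue by contradiction in the usual reducibility style: suppose some $7^-$-face $\alpha$ is incident to a $2$-vertex $v$. By Lemma~\ref{lem:basic}$(iii)$ and Lemma~\ref{lem:no6}, the face $\alpha$ has length $4$, $5$, or $7$, and by Lemma~\ref{lem:basic}$(ii)$ the neighbors of $v$ are $3^+$-vertices. I would first delete $v$ (and its two incident edges) from $G$; by minimality the resulting graph $G' = G - v$ admits a $2$-FEC $\varphi$ with at most $7$ colors. It then remains to colour the two edges $e_1 = uv$ and $e_2 = vw$ incident to $v$, where $u$ and $w$ are the neighbours of $v$. The key point is to bound the number of distinct colours appearing on the $2$-facial neighbourhoods of $e_1$ and $e_2$ in $G$, and show that in each of the admissible face lengths this is small enough (together with a list-colouring argument on the pair $\{e_1,e_2\}$) to finish.

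The main computation is the following count. Each of $e_1, e_2$ lies on exactly two faces: the face $\alpha$ and one other face. On $\alpha$, the edges within facial distance $2$ of $e_1$ (other than $e_2$) number at most $3$ when $\alpha$ is a $4$-face (where in fact $e_1$ and $e_2$ together see only the other two edges of $\alpha$), and this number grows with the length of $\alpha$; on the other face containing $e_1$, there are at most $4$ further edges within facial distance $2$ — but crucially, because $u$ (resp. $w$) may have degree as small as $3$, several of those slots collapse. I would go through the three cases $|\alpha| \in \{4,5,7\}$ separately. For the $4$-face this is immediate: $e_1$ has at most $2 + 4 = 6$ coloured $2$-facial neighbours hence an available colour, and then $e_2$, sharing a colour-constraint structure isomorphic to an edge of a small graph, still has a free colour; more carefully, the $2$-medial graph $\ngph{\{e_1,e_2\}}$ is an edge or a path and is $2$-choosable, so two available colours each suffice. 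For the $5$- and $7$-faces the same scheme applies but one must be slightly more careful and may need to also uncolour a bounded number of nearby edges of $\alpha$ (as in the proof of Lemma~\ref{lem:no6}) so that $\ngph{H}$ for the uncoloured set $H$ is a short cycle or path, which is $2$-choosable by Theorem~\ref{thm:lstbrooks}, and then verify each such edge retains at least $2$ available colours.

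The hard part will be the bookkeeping for the $7$-face: there the two edges $e_1, e_2$ can have as many as $2\cdot 3 = 6$ neighbours within distance $2$ along $\alpha$ itself (plus up to $4$ more on the opposite faces), so a naive count gives too many forbidden colours and one genuinely needs the structural restrictions — the absence of separating $5^-$-cycles (Lemma~\ref{lem:basic}$(iv)$) to rule out coincidences that would otherwise force extra distinct colours, and the fact that $v$ has exactly two neighbours so that $e_1$ and $e_2$ share the vertex $v$ and hence share the two "inner" constraints on $\alpha$. I would organise the $7$-face case by first uncolouring, in addition to $e_1$ and $e_2$, the edges of $\alpha$ incident to $u$ and to $w$ other than $e_1,e_2$; then the set $H$ of uncoloured edges induces in $\ngph{H}$ a path on at most $6$ vertices (it cannot close into a cycle, again by Lemma~\ref{lem:basic}$(iv)$), each vertex of which has at least $2$ colours available after a careful count, and a path is $2$-choosable, completing the extension to $G$ and yielding the contradiction.
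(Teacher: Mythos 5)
Your Case-by-length framework matches the paper's, and your treatment of the $4$-face is exactly the paper's Case~1 (delete $v$; each of $uv,vw$ keeps at least $2$ available colors; a $K_2$ with lists of size $2$ is colorable). The gap is in the $5$- and $7$-face cases, which is where all the difficulty sits, and there your plan of ``delete $v$, uncolor a few nearby edges of $\alpha$, count, and invoke $2$-choosability of a path'' does not survive the bookkeeping. For a $5$-face $\alpha=(u,v,w,x_1,x_2)$ with the other face $\beta$ at $v$ of length $\ge 7$, after deleting $v$ each of $uv$ and $vw$ has $6$ colored $2$-facial neighbors (four on $\alpha\cup\{x_2u,wx_1,x_1x_2\}$-side minus the uncolored $vw$, plus three on $\beta$), hence only \emph{one} guaranteed available color each, and $uv,vw$ are adjacent in $\ngph{H}$ --- so the extension can fail. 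Uncoloring more edges of $\alpha$ does not help: everything on a $5$-face is within facial distance $2$ of everything else, so $\ngph{H}$ becomes a clique-like graph, not a path, and the newly uncolored edges themselves have only $2$ available colors while acquiring degree $3$ in $\ngph{H}$. The same problem recurs for the $7$-face: with $H=\{uv,vw,ux_4,wx_1\}$ the graph $\ngph{H}$ is $K_4$ minus the edge $\{ux_4,wx_1\}$, and $ux_4$ (resp.\ $wx_1$) has up to $6$ colored $2$-facial neighbors, i.e.\ only one available color, which is not enough.

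The missing idea is the paper's edge-identification trick: instead of merely deleting $v$, one modifies $G$ so that two specific edges far apart on the relevant face are \emph{forced to receive the same color} in the smaller graph, which uniformly saves one forbidden color on every uncolored edge and turns all the counts from ``$1$ available'' into ``$2$ available''. Concretely, for the $7$-face the paper identifies $ux_4$ with $x_1x_2$; for the $5$-face with a $7^+$-face $\beta$ it deletes $v$ and identifies the two boundary vertices $y_1,z_1$ of $\beta$ so that $y_1y_2$ and $z_1z_2$ get \emph{distinct} colors, guaranteeing $|L(uv)\cup L(vw)|\ge 2$; and for the $5$-face/$5$-face subcase it observes that $G-v$ contains a $6$-face and reuses the reduction of Lemma~\ref{lem:no6} to obtain a coloring in which two edges of that $6$-face repeat a color. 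In each case one must also verify (via Lemma~\ref{lem:basic}) that the identified edges are not already $2$-facially adjacent. Your sketch correctly senses that ``structural restrictions'' are needed for the $7$-face, but it never names the mechanism that actually closes the counting gap, so as written the proof does not go through for face lengths $5$ and $7$.
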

\begin{proof}
	Let $\alpha$ be a $7^-$-face of $G$. By Lemmas~\ref{lem:basic} and~\ref{lem:no6}, the length of $\alpha$ is either $4$, $5$, or $7$.
	Let $v$ be a $2$-vertex incident to $\alpha$ and $u$ and $w$ the two neighbors of $v$. Let $\beta$ be the second face incident to $v$. 
	We consider the cases regarding the length of $\alpha$.

	\medskip
	\textit{Case 1: $\alpha$ is a $4$-face.\quad}
	Let $G' = G - v$ and $\varphi$ be a $2$-FEC of $G'$ with at most $7$ colors. For the edges $uv$ and $vw$ there are at least $2$ available colors. 
	Thus, $\varphi$ can easily be extended to $uv$ and $vw$, a contradiction.

	\medskip
	\textit{Case 2: $\alpha = (u, v, w, x_1, x_2)$ is a $5$-face.\quad}	
	By Case 1, we have that $\beta$ is either a $5$-face or a $7^+$-face. In the former case, consider the graph $G' = G - v$ and notice
	that $\alpha$ and $\beta$ form a $6$-face $\gamma$ in $G'$. By the proof of Lemma~\ref{lem:no6}, we have that there is a $2$-FEC of $G'$ such that
	two edges of $\gamma$ are assigned the same color. Hence, the edges $uv$ and $vw$ have at most $5$ distinct colors in the $2$-facial neighborhood,
	which means that they have at least two available colors and we can color them.
	
	Therefore, we may assume that $\beta$ is a $7^+$-face. Let $y_2$, $y_1$, $u$, $v$, $w$, $z_1$, and $z_2$ be the vertices appearing on the boundary
	of $\beta$ in the given order (see Fig.~\ref{fig:lem72proof}).
	\begin{figure}[ht]
		$$
			\includegraphics{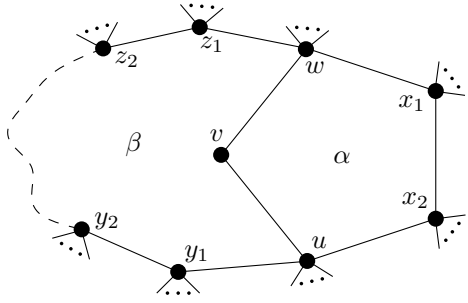}
		$$
		\caption{The faces $\alpha$ and $\beta$ of Case 2.}
		\label{fig:lem72proof}
	\end{figure}
	Let $G'$ be the graph obtained from $G$ by removing the vertex $v$ and identifying the vertices $y_1$ and $z_1$. 
	Let $\varphi$ be a $2$-FEC of $G'$. Notice that the edges $y_1y_2$ and $z_1z_2$ are assigned distinct colors,
	since they are facially adjacent in $G'$. In order to extend $\varphi$ to $G$, we need to recolor the edges $uy_1$ and $wz_1$, which have 
	at least one available color each, and color the edges $uv$ and $vw$. After $uy_1$ and $wz_1$ being recolored, there is at least one available
	color for $uv$ and $vw$. Moreover, notice that the union of the sets of available colors of $uv$ and $vw$ has at least two elements, since $y_1y_2$ and 
	$z_1z_2$ are assigned distinct colors, so we can color them. Hence, $\varphi$ is extended to $G$, a contradiction.
	
	\medskip
	\textit{Case 3: $\alpha=(u, v, w, x_1, x_2, x_3, x_4)$ is a $7$-face.\quad} 
	Let $G'$ be the graph obtained from $G$ by identifying the edges 
	$ux_4$ and $x_1x_2$ such that the vertex $u$ goes to $x_1$ and $x_4$ goes to $x_2$.
	Again, notice that $ux_4$ and $x_1x_2$ are not at facial distance $2$, otherwise a separating cycle of length at most $5$
	or adjacent $2$-vertices appear in $G$, what contradicts Lemma~\ref{lem:basic}.
	Let $\varphi$ be a $2$-FEC of $G'$. Uncolor the edges $uv$, $vw$, $wx_1$, $x_2x_3$ and $x_3x_4$. Since the edges $ux_4$ and $x_1x_2$ are assigned 
	the same color in $G$,
	the edges $uv$ and $vw$ have at least $3$ available colors and each of the remaining three edges has at least $2$. Therefore, the core $\zeta(\ngph{H})$,
	where $H$ is the set of noncolored edges in $G$, is the null graph. It follows that $\varphi$ can be extended to $G$, what establishes the lemma.
\end{proof}

\begin{lemma}
	\label{lem:2verts}
	The facial distance between any two $2$-vertices is at least $4$ in $G$.
\end{lemma}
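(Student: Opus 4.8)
The plan is to argue by contradiction, assuming two $2$-vertices $u$ and $v$ lie at facial distance at most $3$ in $G$. By Lemma~\ref{lem:basic}$(ii)$, neither $u$ nor $v$ has a $2$-neighbor, so $u$ and $v$ are not adjacent; by Lemma~\ref{lem:72v} together with Lemmas~\ref{lem:basic} and~\ref{lem:no6}, the face $\alpha$ on whose boundary they realize this short distance must be an $8^+$-face. Hence I may write the boundary of $\alpha$ as containing a segment $w_2, w_1, u, a_1, \dots, a_t, v, z_1, z_2$ with $t \in \{1, 2\}$ (facial distance $2$ or $3$ between $u$ and $v$), where the four end-vertices $w_1, w_2, z_1, z_2$ are forced to exist and be distinct from the others because $\alpha$ is long and $G$ is $2$-connected (Lemma~\ref{lem:2conn}).

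The reduction I would use is to delete one of the two $2$-vertices, say $v$, and to identify a suitable pair of edges on the boundary of $\alpha$ — for instance $u a_1$ with $z_1 z_2$, or in the $t=2$ case the pair $u a_1$ with $a_2 v$ — in the same spirit as the constructions in Lemmas~\ref{lem:no6} and~\ref{lem:72v}. One must first check, exactly as before, that the identified edges are not at facial distance $2$ in $G$: otherwise either a separating cycle of length at most $5$ or a pair of adjacent $2$-vertices would appear, both excluded by Lemma~\ref{lem:basic}. The resulting graph $G'$ is strictly smaller, so by minimality it has a $2$-FEC $\varphi$ with at most $7$ colors, and because the two identified edges receive a common color, the edges of $\alpha$ near $u$ and $v$ that had their $2$-facial neighborhoods altered gain extra available colors when we lift $\varphi$ back to a partial coloring $\psi$ of $G$.

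It then remains to color the small set $H$ of uncolored edges — the two edges at $v$, namely $a_t v$ and $v z_1$, together with those edges near $u$ whose neighborhood changed — and to show $\psi$ extends. The standard device is to bound, for each $e \in H$, the number of colored $2$-facial neighbors, using that the identified edges share a color, that $u$ and $v$ are $2$-vertices (so each contributes few constraints), and that $a_t v$ and $v z_1$ lie on a long face; this yields $|L(e)| \ge d_{\ngph{H}}(e)$ everywhere, with strict inequality somewhere, or else a block of $\ngph{H}$ that is not a Gallai tree. Invoking Theorem~\ref{thm:lstbrooks}, equivalently checking that $\core{\ngph{H}}$ is the null graph, completes the extension and the contradiction. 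I expect the main obstacle to be the careful bookkeeping of how identifying edges perturbs the $2$-facial neighborhoods: one must verify that the gain of a shared color genuinely frees up enough colors for the edges incident to $u$ (which may now be close to two previously-distant parts of $\alpha$'s boundary), and possibly to split into the subcases $t=1$ and $t=2$, and within them on whether the second face at $v$ (and at $u$) is a $5$-face or a $7^+$-face, reusing the $6$-face trick of Lemma~\ref{lem:no6} when two $5$-faces merge.
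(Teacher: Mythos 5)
Your overall framework (minimal counterexample, delete a $2$-vertex, lift the coloring, finish with Theorem~\ref{thm:lstbrooks}) is the right genre, but the proof as written has a genuine gap exactly at the step you defer: the claim that the bookkeeping ``yields $|L(e)| \ge d_{\ngph{H}}(e)$ everywhere, with strict inequality somewhere, or else a block of $\ngph{H}$ that is not a Gallai tree.'' This is precisely what can fail here. The paper's own reduction is in fact simpler than yours --- it deletes \emph{both} $2$-vertices and identifies nothing --- and in the distance-$3$ case the four uncolored edges $ux$, $uw$, $vz$, $vy$ form a $4$-path in the $2$-medial graph with lists of sizes at least $1,2,2,1$. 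A path is a Gallai tree, the lists can all be tight, and a path with lists $\{a\},\{a,b\},\{a,b\},\{a\}$ is not colorable; so Theorem~\ref{thm:lstbrooks} alone does not close the case. The paper needs an extra idea: if the core is nonempty, then every uncolored edge sees $7$ distinct colors among its colored $2$-facial neighbors, so one may uncolor the middle edge $wz$, give its color to both end edges $ux$ and $vz$ (they are far apart on $\alpha$), color $uw$ and $vy$ greedily, and finally recolor $wz$. Your proposal contains no substitute for this step.

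Your alternative reduction (delete $v$ and identify a pair of boundary edges) might in principle buy the missing color, but you have not verified it, and one of the two identifications you name is incoherent: for $t=2$ you propose identifying $ua_1$ with $a_2v$, yet $a_2v$ is incident to the vertex $v$ you have just deleted. The other choice ($ua_1$ with $z_1z_2$) would need a careful recount of which edges lose their colors and how many constraints each retains; since the identification perturbs the neighborhood of edges at $u$ as well as at $v$, the resulting $\ngph{H}$ is larger than the paper's $4$-path and the degree/list comparison is not obviously better. As it stands, the argument is a plan that places the entire burden on an unverified invocation of Theorem~\ref{thm:lstbrooks}, and the closest fully worked-out instance of that invocation (the paper's distance-$3$ case) shows it does not go through without an additional recoloring trick.
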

\begin{proof}
	By Lemma~\ref{lem:basic}, we have that $2$-vertices are not adjacent in $G$. In order to prove this lemma, we consider the cases
	when the facial distance between two $2$-vertices $u$ and $v$ incident to some face $\alpha$ is $2$ and $3$, respectively. 
	Note that $\alpha$ is of length at least $8$ by Lemma~\ref{lem:72v}.

	In the former case, let $w$ be a common neighbor of $u$ and $v$ such that they are consecutive on the boundary of $\alpha$
	and let $x$ and $y$ be the second neighbors of $u$ and $v$, respectively.
	Let $G' = G - u - v$ and let $\varphi$ be a $2$-FEC of $G'$ with at most $7$ colors. The coloring $\varphi$ induces a $2$-FEC of $G$
	where the edges $ux$, $uw$, $vw$, and $vy$ remain noncolored.
	
	Observe that $ux$ and $vy$ have at least $2$ available colors, while $uw$ and $vw$ have at least $3$ each. Hence, the graph 
	$\ngph{\set{ux,uw,vw,vy}}$, which is isomorphic to two triangles sharing an edge, is colorable by Theorem~\ref{thm:lstbrooks}.
	%If $\alpha$ is a $4$-face ($x=y$), there are $4$ avaiable colors for each of noncolored edges. By  Theorem~\ref{thm:lstbrooks} a corresponding $2$-medial graph, isomorphic to a complete graph on four vertices, is $L$-colorable. 
	%Suppose that $\alpha$ is a $5$-face. Let $\beta$ be the second face incident to $u$. If $\beta$ is a $4$-face.....If $\beta$ is a $5$-face.....If $\beta$ is a $7^+$-face, let $x_2, x_1, x, u, w, w_1, w_2$ be the vertices appearing on the boundary of hte face $\beta$. Let $G'$ be the graph obtained from $G$ by removing vertices $u,v$ and identifying the two edges $ww_1$ and $xy$ such that the vertex $w_1$ goes to $x$, and $w$ goes to $y$. Graph $G'$ is no more a counterexample, thus there exists a $2$-FEC $\varphi$ with at most $7$ colors, which induces a partial $2$-FEC $\psi$ of $G$ with noncolored edges $uw, vw, ux, vy$ (eventualy $ww_1$ if it is needed). Note that edges $xy$ and $ww_1$ recieve the same color while edges $xx_1, x_1x_2$ and $w_1w_2$ receive distinct colors. Therefore each vertex of the $2$-medial graph of noncolored edges has at least $3$ avaiable colors, moreover the sets of avaiable colors for the edges $ux$ and $uw$ are different. It means that there are at least $4$ elements in the union of the sets of avaiable colors, hence using Theorem by Hall we can color the vertices of the $2$-medial graph.

	In the latter case, we assume that the facial distance between $u$ and $v$ is $3$. Let $x,u,w,z,v,y$ be the vertices
	appearing on the boundary of some face $\alpha$ of $G$. Let $G' = G - u - v$ and $\varphi$ be a $2$-FEC of $G'$ with at most $7$ colors.
	Similarly as in the previous case, there are four noncolored edges, $ux$, $uw$, $vy$, and $vz$, in $G$ whose $2$-medial graph is isomorphic 
	to a $4$-path $p$. The two endvertices of $p$ have at least one available color, while the two middle vertices have at least two.
	In case when the core graph of $p$ is not the null graph (otherwise we can extend $\varphi$ to $G$), 
	we have that all $2$-facial neighbors of every noncolored edge are assigned distinct colors.
	Thus, we may uncolor the edge $wz$ and use its color for the edges $ux$ and $vz$, color with an available color $uw$ and $vy$, and finally
	color $wz$, which has at least one available color. So, $\varphi$ can be extended to $G$, a contradiction.
\end{proof}

It follows that there are at most two $2$-vertices incident to an $8$-face. In the following lemma we show that an
$8$-face is incident to at most one $2$-vertex.

\begin{lemma}
	\label{lem:82v}
	An $8$-face in $G$ is incident to at most one $2$-vertex.
\end{lemma}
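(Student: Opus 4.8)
\textit{Setup.}\quad
The plan is the usual reduction argument: assume such an $8$-face exists, build a strictly smaller plane graph, colour it by minimality, and extend the colouring. Let $\alpha=(v_1,\dots ,v_8)$ be an $8$-face of $G$ incident to two $2$-vertices. Any two vertices of an $8$-face are at facial distance at most $4$, so by Lemma~\ref{lem:2verts} the two $2$-vertices are at facial distance exactly $4$; after relabelling they are $v_1$ and $v_5$. By Lemma~\ref{lem:basic}$(ii)$ the vertices $v_2,v_4,v_6,v_8$ are $3^+$-vertices, and by Lemma~\ref{lem:72v} the second face $\beta_1$ at $v_1$ and the second face $\beta_5$ at $v_5$ have length at least $8$. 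Moreover, none of $v_2v_8$, $v_3v_7$, $v_4v_6$ is an edge of $G$: such a chord of $\alpha$, together with the shorter arc of $\alpha$ it cuts off, would be a separating cycle of length at most $5$ (or would force two adjacent $2$-vertices), contradicting Lemma~\ref{lem:basic}$(ii)$ and $(iv)$.

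\textit{The reduction.}\quad
I would delete $v_1$ and $v_5$ and, at the same time, perform a small number of edge identifications on $\alpha$ (and, if needed, on $\beta_1,\beta_5$) in the spirit of Lemmas~\ref{lem:no6} and~\ref{lem:72v}: concretely, fold the disc $\alpha$ across the diameter $v_1v_5$, identifying $v_2v_3$ with $v_7v_8$ and $v_3v_4$ with $v_6v_7$ (so the pairs $v_1v_2,v_1v_8$ and $v_4v_5,v_5v_6$ become parallel and are suppressed with the two $2$-vertices). The facts of the first paragraph, together with Lemma~\ref{lem:basic}$(iv)$, are exactly what is needed to verify that the resulting graph $G'$ is a plane (multi)graph with strictly fewer vertices and edges than $G$, and that the identified edge pairs are not at facial distance $2$ in $G$, so they may share a colour. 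By minimality $G'$ admits a $2$-FEC $\varphi$ with at most $7$ colours; restricting $\varphi$ to $G$ gives a partial $2$-FEC $\psi$ in which exactly the edges of $\alpha$ are uncoloured and for which $\psi(v_2v_3)=\psi(v_7v_8)$ and $\psi(v_3v_4)=\psi(v_6v_7)$.

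\textit{Extending the colouring.}\quad
It remains to colour $E(\alpha)$. Each of $v_1v_2,v_1v_8,v_4v_5,v_5v_6$ has only three coloured $2$-facial neighbours, all lying on $\beta_1$ or $\beta_5$, hence retains at least four available colours, while the two forced equalities leave at least two colours available for each of $v_2v_3,v_3v_4,v_6v_7,v_7v_8$. Regard this as a list-colouring of the auxiliary graph $M$ obtained from the $2$-medial graph \ngph{E(\alpha)} by merging the vertex $v_2v_3$ with $v_7v_8$ and the vertex $v_3v_4$ with $v_6v_7$ (legitimate since those pairs are non-adjacent in $\ngph{E(\alpha)}$ and are forced monochromatic). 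In $M$ the vertices $v_1v_2,v_1v_8,v_4v_5,v_5v_6$ have degree $3$ and list size at least $4$, hence are free, so the core of $M$ collapses to the single edge joining the two merged vertices; if that edge can be coloured — that is, if at least two colours survive on it — Theorem~\ref{thm:lstbrooks} completes the extension of $\psi$ to a $2$-FEC of $G$ with at most $7$ colours, contradicting the choice of $G$. The heart of the proof is precisely this last point: guaranteeing the surviving colours on the merged edges is not automatic for the fold described above, and in the borderline situations — $v_2v_3$ lying on a common face with $v_7v_8$, $v_3v_4$ with $v_6v_7$, or $\beta_1=\beta_5$ — one must replace the identification by a better-adapted one or run a short auxiliary recolouring; handling these exceptional cases, once more via the non-existence of small separating cycles, is where the real work lies.
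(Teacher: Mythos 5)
Your reduction is genuinely different from the paper's, and it does not go through as written. The paper identifies a \emph{single} pair of edges of $\alpha$, namely $v_2v_3$ with $v_6v_7$ (with $v_2\to v_7$, $v_3\to v_6$), keeps the one shared colour on that pair, and uncolours the remaining six edges of $\alpha$; these then have at least $3,3,3,3$ and $2,2$ available colours respectively, and the core of the $2$-medial graph on those six edges is the null graph, so the extension is immediate. Your ``fold'' performs two identifications ($v_2v_3$ with $v_7v_8$ and $v_3v_4$ with $v_6v_7$) and then needs the two merged vertices of $M$ to be colourable, i.e.\ it needs a common available colour for $v_2v_3$ and $v_7v_8$ and one for $v_3v_4$ and $v_6v_7$ (in fact two colours in total, since the merged vertices are adjacent in $M$). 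This is exactly where the argument breaks: each of $v_2v_3$ and $v_7v_8$ is only guaranteed $7-4=3$ available colours, coming from the four coloured edges on its \emph{other} incident face, and those two forbidden sets are unrelated, so the intersection $L(v_2v_3)\cap L(v_7v_8)$ can perfectly well be empty. Your assertion that ``the two forced equalities leave at least two colours available for each of $v_2v_3,v_3v_4,v_6v_7,v_7v_8$'' is therefore unjustified (nothing is ``forced'' here anyway -- making the pairs monochromatic is a choice you impose, and it is precisely the choice that may be infeasible). You acknowledge this yourself in the last sentence, deferring the ``real work'' to unspecified better-adapted identifications or auxiliary recolourings; but that deferred work is the whole content of the lemma, so the proof as submitted has a genuine gap. (The alternative reading of your construction, in which the identified pairs simply keep their colours from $G'$ and only $v_1v_2,v_1v_8,v_4v_5,v_5v_6$ are recoloured, fails as well: each of those four edges is then guaranteed only one available colour, and they form two adjacent pairs in the $2$-medial graph, which need not be list-colourable.)

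There is also an internal inconsistency in the middle step: you state that after restricting $\varphi$ ``exactly the edges of $\alpha$ are uncoloured'' and in the same sentence prescribe values $\psi(v_2v_3)=\psi(v_7v_8)$ and $\psi(v_3v_4)=\psi(v_6v_7)$; you need to decide whether those four edges are kept coloured or recoloured, and either way the quantitative claim that makes the core collapse is missing. If you want to repair the argument along the paper's lines, identify only one pair of edges of $\alpha$ at facial distance $4$ (e.g.\ $v_2v_3$ with $v_6v_7$), check as you did that they are not $2$-facial neighbours in $G$ (no short separating cycle, no adjacent $2$-vertices), keep that single colour, and verify that the six uncoloured edges have lists of sizes at least $3,3,3,3,2,2$; the core of their $2$-medial graph is then null, and no case analysis on coincidences such as $\beta_1=\beta_5$ is needed.
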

\begin{proof}
	Let $\alpha = (v_1,v_2,\dots,v_8)$ be an $8$-face of $G$ incident to two $2$-vertices. By Lemma~\ref{lem:2verts}, they are at facial distance $4$,
	so we may assume that $d(v_1) = d(v_5) = 2$. Let $G'$ be a graph obtained by identifying the edges $v_2v_3$ and $v_6v_7$, where the vertex
	$v_2$ goes to $v_7$ and $v_3$ goes to $v_6$.
	The edges $v_2v_3$ and $v_6v_7$ are not $2$-facial neighbors in $G$, otherwise a separating cycle of length at most $5$
	or adjacent $2$-vertices appear in $G$, contradicting Lemma~\ref{lem:basic}.	
	Let $\varphi$ be a $2$-FEC of $G'$ with at most $7$ colors which induces a partial
	$2$-FEC of $G$ where the edges $v_1v_2$, $v_3v_4$, $v_4v_5$, $v_5v_6$, $v_7v_8$, and $v_1v_8$ remain noncolored. Notice that 
	there are at least $2$ available colors for the edges $v_3v_4$ and $v_7v_8$ and at least $3$ available colors for the remaining edges, since
	the edges $v_2v_3$ and $v_6v_7$ are assigned the same color.	
	Consider the graph $\ngph{H}$, where $H$ is the set of noncolored edges. It is easy to deduce that $\zeta(\ngph{H})$ is the null graph
	%(or $4$-cycle with two avaiable colors for each vertex,)
	hence $\ngph{H}$ is colorable and $\varphi$ can be extended to $G$, a contradiction.	
\end{proof}

\begin{lemma}
	\label{lem:44}
	There are no adjacent $4$-faces in $G$.
\end{lemma}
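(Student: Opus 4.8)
The plan is to assume, for a contradiction, that $G$ contains two $4$-faces $\alpha$ and $\beta$ sharing an edge $e=uv$, write $\alpha=(u,v,a,b)$ and $\beta=(v,u,c,d)$, and reduce to the smaller plane graph $G'=G-e$. By the minimality of $G$, $G'$ admits a $2$-FEC $\varphi$ with at most $7$ colors; I would then show that $\varphi$ is already a valid partial $2$-FEC of $G$ in which only $e$ is uncolored, and that $e$ still has an available color, which yields the contradiction.

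Before the main step I would dispose of the degenerate configurations. Since a $3$-cycle in $G$ would either bound a $3$-face or be separating, both of which are forbidden by Lemma~\ref{lem:basic}, the graph $G$ has no triangle. Combining this with Lemma~\ref{lem:2conn}, a short case analysis shows that $u,v,a,b,c,d$ are six distinct vertices: for instance, $a=c$ creates the triangle $uva$, whereas $a=d$ makes $\alpha$ and $\beta$ share the two edges $e$ and $va$, so that their four remaining boundary edges form a separating $4$-cycle $abuc$ (with $v$ in its interior) — unless $V(G)=\set{u,v,a,b,c}$, in which case $G$ is a fixed small plane graph that trivially has a $2$-FEC with fewer than $7$ colors; the other coincidences are symmetric. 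Consequently, in $G'$ the faces $\alpha$ and $\beta$ merge into a genuine $6$-face $\gamma=(u,b,a,v,d,c)$.

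The key point is that this merge does not shorten facial distances among the surviving edges. Every face of $G$ other than $\alpha$ and $\beta$ is still a face of $G'$, so those constraints are automatically respected by $\varphi$. For $\alpha$ and $\beta$: the edges $va,ab,bu$ appear consecutively on $\gamma$ (as $ub,ba,av$), and likewise $uc,cd,dv$ appear consecutively on $\gamma$; hence any two edges at facial distance at most $2$ on $\alpha$ or on $\beta$ (other than $e$ itself) are still at facial distance at most $2$ on $\gamma$. Therefore every $2$-facial adjacency of $G$ between edges different from $e$ is also a $2$-facial adjacency of $G'$, so $\varphi$ violates no constraint of $G$ and is a valid partial $2$-FEC of $G$ with only $e$ uncolored. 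Finally, $e$ lies on exactly the two faces $\alpha$ and $\beta$ (there are no bridges, by Lemma~\ref{lem:2conn}), and on each of these $4$-faces its $2$-facial neighbors are exactly the other three edges of that face; thus $e$ has at most $3+3=6$ colored $2$-facial neighbors, leaving at least one of the $7$ colors available. Assigning $e$ that color extends $\varphi$ to a $2$-FEC of $G$ with at most $7$ colors, contradicting the choice of $G$.

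I expect the only delicate points to be the nondegeneracy step and the routine verification that the merge creates no new short facial paths; the heart of the argument is simply the inequality $6<7$. This is exactly why the statement is restricted to two $4$-faces: the same deletion would fail for a $4$-face adjacent to a $5$-face, since there $e$ could have $3+4=7$ forbidden colors.
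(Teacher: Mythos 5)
Your proof is correct and follows the same route as the paper: delete the shared edge $e$, invoke minimality to color $G-e$, and observe that $e$ has at most $3+3=6$ colored $2$-facial neighbors, leaving one of the $7$ colors free. The paper's version is a three-line argument that omits the nondegeneracy checks and the verification that deleting $e$ does not destroy any $2$-facial adjacency among the remaining edges; your additional care on those points is sound but does not change the approach.
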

\begin{proof}
	Let $\alpha$ and $\beta$ be two adjacent $4$-faces sharing an edge $e$. By the minimality, there is a $2$-FEC of $G-e$ with at most $7$ colors.
	The edge $e$ has at most $6$ colored facial neighbors in the $2$-neighborhood, so we can color it with an available color, a contradiction.
\end{proof}

\begin{lemma}
	\label{lem:45}
	Let a $4$-face and a $5$-face be adjacent in $G$ by an edge $uv$. Then, both vertices, $u$ and $v$,
	are of degree at least $4$.
\end{lemma}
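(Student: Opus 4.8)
First I would reduce to a single case. By Lemma~\ref{lem:72v} a $7^-$-face carries no $2$-vertex, so $d(u)\ge 3$ and $d(v)\ge 3$; since $u$ and $v$ play symmetric roles with respect to $uv$ and to the two incident faces, it is enough to rule out $d(u)=3$. Assume $d(u)=3$; write the $4$-face as $\alpha=(u,v,x,y)$ and let $\beta$ be the $5$-face. The edges of $G$ at $u$ are then $uv$, $uy$ and $up$, where $p$ is the third neighbour of $u$ (so $up$ lies on $\beta$), and the third face at $u$, call it $\gamma$, is bounded in part by the consecutive edges $uy,up$. As $\gamma$ shares the edge $uy$ with the $4$-face $\alpha$, Lemma~\ref{lem:44} forbids $\gamma$ from being a $4$-face and Lemma~\ref{lem:no6} forbids it from being a $6$-face, so $|\gamma|\ge 5$. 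Recall also, for use in the construction below, that by Lemma~\ref{lem:basic} the graph $G$ has no two adjacent $2$-vertices and no separating cycle of length at most $5$.

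The approach is the usual one: produce a smaller graph, colour it by minimality, and extend the colouring. Merely deleting $u$ is not enough: then $\alpha,\gamma,\beta$ merge into one face of $G-u$, a $2$-FEC of $G-u$ leaves $uv,uy,up$ uncoloured, these three edges induce a triangle in the $2$-medial graph $\ngph{\{uv,uy,up\}}$, and counting shows they have at least $2$, $2$ and only $1$ available colours respectively — too few for Theorem~\ref{thm:lstbrooks}. (Contracting $uv$ has the same flaw: it turns $\alpha$ into a triangle and $\beta$ into a $4$-face, but leaves the re-inserted edge $uv$ with as many as seven forbidden colours.) So I would instead let $G'$ be obtained from $G$ by deleting $u$ and simultaneously identifying a carefully chosen pair of edges — one on the boundary of $\alpha\cup\beta$, one on the boundary of $\gamma$ — so that this identification shortens the merged face and forces those two edges to receive the same colour in every $2$-FEC of $G'$. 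Using the facts recorded above one checks that the identification creates no loop, no digon and no separating cycle of length at most $5$, so $G'$ is a plane graph with fewer vertices than $G$.

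By minimality $G'$ has a $2$-FEC $\varphi$ with at most $7$ colours. Restricting $\varphi$ to the surviving edges gives a partial $2$-FEC $\psi$ of $G$ — it is proper because every pair of $2$-facial neighbours of $G$ coloured by $\psi$ is again a pair of $2$-facial neighbours in $G'$ — whose uncoloured set $H$ consists of $uv,uy,up$ together with the pre-image(s) of the identified edge that still need a colour. The forced colour repetition lies among the edges $2$-facially adjacent to $up$, so $up$ now has at least two available colours, while $uv$, $uy$ and the other members of $H$ keep enough available colours that $\core{\ngph{H}}$ is the null graph (equivalently, $\ngph{H}$ is handled by Theorem~\ref{thm:lstbrooks}). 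Hence $\psi$ extends to a $2$-FEC of $G$ with at most $7$ colours, contradicting the choice of $G$.

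The hard part is choosing the identification. The edge $up$ sits between the $5$-face $\beta$ and the face $\gamma$, so it is $2$-facially adjacent to as many as eight edges; after deleting $u$ two of them ($uv$ and $uy$) are uncoloured but the remaining six may carry six distinct colours of $\varphi$, which is exactly what leaves $up$ one colour short. The identification must be placed so as to force two of these six edges (or one of them and the eventual colour of $uv$ or $uy$) to coincide, while creating no new $2$-facial adjacency that would undo the count; depending on $|\gamma|$ and on the degrees of the vertices around $p$, this may call for a small case split, a second identification, or an additional contraction. Once the identification is pinned down, verifying the available-colour bounds and that $\ngph{H}$ has empty core is routine.
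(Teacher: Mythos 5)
There is a genuine gap: the heart of the argument --- the explicit reducible configuration --- is never specified. You correctly observe that deleting $u$ alone fails (lists of sizes $2,2,1$ on a triangle in the $2$-medial graph), but your repair is only a plan: ``delete $u$ and identify a carefully chosen pair of edges, one on the boundary of $\alpha\cup\beta$ and one on the boundary of $\gamma$,'' with the admission at the end that the choice ``may call for a small case split, a second identification, or an additional contraction.'' Without naming the two identified edges, one cannot check the three things the proof lives or dies on: that they are not $2$-facial neighbours of each other in $G$ (otherwise the induced partial colouring of $G$ is improper), that the forced repetition really lands in the $2$-facial neighbourhood of the deficient edge $up$, and that after uncolouring every edge whose $2$-facial neighbourhood changed the core of $\ngph{H}$ is null. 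Your chosen direction makes this genuinely hard to pin down: the face $\gamma$ has unknown (possibly huge) length and unknown vertex degrees, and the natural candidates on the $\gamma$-side (the edges within distance $2$ of $up$ along $\gamma$) can be facially adjacent to the candidates on the $\beta$-side, e.g.\ when $d(p)=3$ the edge of $\beta$ at $p$ and the edge of $\gamma$ at $p$ are consecutive on the third face at $p$, so they cannot be identified. So as written the proof does not close.

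For comparison, the paper avoids $\gamma$ entirely and keeps the vertex $u$: with $\alpha=(u,v,v_1,u_1)$ and $\beta=(u,v,v_2,w,u_2)$, it deletes only the edge $uv$ (so $\alpha$ and $\beta$ merge into a $7$-face of completely known structure) and identifies the edge $vv_1$ of $\alpha$ with the edge $u_2w$ of $\beta$; these are far apart on the merged face, and the check that they are not $2$-facially adjacent in $G$ uses Lemma~\ref{lem:basic} (no adjacent $2$-vertices, no separating cycle of length at most $5$), exactly as in Lemmas~\ref{lem:no6} and~\ref{lem:72v}. Then only $uu_1$, $u_1v_1$, $vv_2$, $v_2w$, $uu_2$ are uncoloured besides $uv$; the edge $uv$ sees just the two identified (equal-coloured) edges, hence has six available colours, the edges at $u$ have at least three, the rest at least two, and the core of the $2$-medial graph is null. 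If you want to salvage your vertex-deletion route, you would have to supply an identification of comparable concreteness and verify the same three points, including the degenerate positions of $p$ and the short-$\gamma$ cases.
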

\begin{figure}[ht]
	$$
		\includegraphics{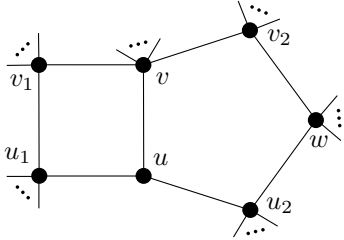}
	$$
	\caption{Adjacent $4$- and $5$-faces in $G$ do not have common $3^-$-vertices.}
	\label{fig:45face}
\end{figure}
\begin{proof}
	Suppose, to the contrary, that $\alpha=(u,v,v_1,u_1)$ is a $4$-face and $\beta=(u,v,v_2,w,u_2)$ is a $5$-face of $G$ where $d(u)=3$ (see Fig.~\ref{fig:45face}). 
	The edges $vv_1$ and $u_2w$ are not at facial distance $2$, for otherwise there are adjacent $2$-vertices or a separating cycle of length at most $5$ in $C$.
	Therefore, let $G'$ be the graph obtained from $G$ by removing the edge $uv$ and identifying the edges $vv_1$ and $u_2w$, where $v$ goes to $w$ and $v_1$ goes 
	to $u_2$. Let $\varphi$ be a $2$-FEC of $G'$ with at most $7$ colors. To obtain a $2$-FEC of $G$ from $\varphi$, we uncolor and assign new colors to the 
	edges $uu_1$, $u_1v_1$, $vv_2$, $v_2w$, $uu_2$, and color $uv$. 
	Observe that the edges $u_1v_1$, $vv_2$, and $v_2w$ have at most $5$ colored $2$-facial neighbors in $G$, and the edges $uu_1$ and $uu_2$ have at most $4$ 
	such neighbors. The only colored $2$-facial neighbors of $uv$ are the edges $vv_1$ and $u_2w$ colored by the same color, hence, $uv$ has $6$ available colors.
	Again, notice that the core of the $2$-medial graph of the noncolored edges is the null graph, hence $\varphi$ can be extended to $G$, a contradiction.
\end{proof}

\begin{lemma}
	\label{lem:55}
	Let two $5$-faces of $G$ be adjacent by an edge $uv$. Then, at least one of the vertices $u$ and $v$ is of degree at least $4$.
\end{lemma}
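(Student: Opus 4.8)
The plan is to follow the pattern of Lemmas~\ref{lem:45} and~\ref{lem:no6}: assume the configuration, pass to a smaller plane graph, lift its $2$-FEC, and recolour only a handful of edges near the configuration. Suppose, then, that two $5$-faces $\alpha=(u,v,v_1,w_1,u_1)$ and $\beta=(u,v,v_2,w_2,u_2)$ share the edge $uv$ and that $d(u)\le 3$ and $d(v)\le 3$. By Lemma~\ref{lem:72v} a $7^-$-face carries no $2$-vertex, so $d(u)=d(v)=3$, and hence $N(u)=\{v,u_1,u_2\}$ and $N(v)=\{u,v_1,v_2\}$. Let $\eta$ be the third face at $u$ (incident to $uu_1$ and $uu_2$) and $\eta'$ the third face at $v$. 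Since $\eta$ shares the edge $uu_1$ with the $5$-face $\alpha$, Lemmas~\ref{lem:basic}$(iii)$, \ref{lem:no6} and~\ref{lem:45} give $|\eta|\notin\{3,4,6\}$, and likewise $|\eta'|\notin\{3,4,6\}$; and using Lemma~\ref{lem:basic}$(iv)$ together with Lemmas~\ref{lem:basic}$(iii)$ and~\ref{lem:45}, one checks that the eight vertices $u,v,u_1,u_2,v_1,v_2,w_1,w_2$ are pairwise distinct, as any coincidence among them would produce a separating cycle of length at most $5$, a $3$-face, or a $4$-face adjacent to a $5$-face.

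Let $G'$ be obtained from $G$ by deleting the vertices $u$ and $v$ and then identifying two pairs of edges, one edge of each pair coming from $\alpha$ and the other from $\beta$ (for instance $v_1w_1$ with $v_2w_2$, and $w_1u_1$ with $w_2u_2$), the pairs being chosen so that the two identifications correspond to non-crossing chords of the single face $F$ into which $\alpha,\beta,\eta,\eta'$ merge when $u$ and $v$ are removed; the identified edges are not $2$-facially adjacent in $G$, for otherwise $G$ would contain a separating cycle of length at most $5$ or two adjacent $2$-vertices. Then $G'$ is a plane graph with fewer edges than $G$, so by minimality it admits a $2$-FEC $\varphi$ with at most $7$ colours. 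Removing $u$ and $v$ deletes from each of $\alpha,\beta,\eta,\eta'$ a block of consecutive boundary edges and so shortens no distance inside a surviving face, and the two identifications (of pairs of non-adjacent edges, glued at the junctions of faces) only add constraints among already-coloured edges; hence the restriction $\psi$ of $\varphi$ to $E(G)\setminus H$, where $H=\{uv,uu_1,uu_2,vv_1,vv_2\}$, is a valid partial $2$-FEC of $G$ in which $v_1w_1,v_2w_2$ carry one common colour and $w_1u_1,w_2u_2$ carry another.

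It remains to colour the five edges of $H$. The only coloured $2$-facial neighbours of $uv$ in $G$ are $v_1w_1,w_1u_1$ (on $\alpha$) and $v_2w_2,w_2u_2$ (on $\beta$), and in $\psi$ these use only two colours, so $uv$ has at least $5$ available colours while its degree in $\ngph{H}$ is $4$; thus $uv$ is a free vertex of $\ngph{H}$. Each of $uu_1,uu_2,vv_1,vv_2$ is incident to a $5$-face and to one of $\eta,\eta'$, so has at most eight $2$-facial neighbours in $G$, three of which lie in $H$; hence it has at least two available colours. In $\ngph{H}$ the edges $uu_1,uu_2,vv_1,vv_2$ induce exactly the $4$-cycle $uu_1\,uu_2\,vv_2\,vv_1$, so after deleting the free vertex $uv$ we are left with a $4$-cycle, which is not a Gallai tree and is therefore $2$-choosable by Theorem~\ref{thm:lstbrooks}. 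Consequently $\ngph{H}$ is colourable, $\psi$ extends to a $2$-FEC of $G$ with at most $7$ colours, and we obtain the desired contradiction.

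The crux is the tightness of the count: $uv$ lies on two $5$-faces and so has eight potential $2$-facial neighbours against a budget of only seven colours, so --- unlike in Lemma~\ref{lem:45}, where a single identification sufficed --- two simultaneous identifications are needed, producing two colour coincidences among the neighbours of $uv$ and so making it free. The bulk of the work is accordingly in the second and third paragraphs above: checking that the two identifications can be performed as non-crossing planar surgeries, that none of the degenerate coincidences among $v_1,\dots,w_2$ occurs, and that exactly the five edges of $H$ must be recoloured; it is there that Lemmas~\ref{lem:44} and~\ref{lem:45} enter.
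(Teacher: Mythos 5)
There is a genuine gap, and it sits exactly where you wave your hands: the claim that the two simultaneous identifications ``only add constraints among already-coloured edges.'' They also \emph{destroy} constraints. In $G$ the pairs $v_1w_1,w_1u_1$ (consecutive on $\alpha$) and $v_2w_2,w_2u_2$ (consecutive on $\beta$) are facially adjacent, and in your construction all four of these edges stay coloured. But in $G'$ the faces $\alpha$ and $\beta$ no longer exist: after deleting $u,v$ and zipping the path $v_1w_1u_1$ onto $v_2w_2u_2$, the two glued edges $v_1w_1{=}v_2w_2$ and $w_1u_1{=}w_2u_2$ meet only at the merged vertex $w_1{=}w_2$, and since $d(w_1),d(w_2)\ge 3$ they are not even consecutive in the rotation there; each glued edge is bordered by the two faces that lay on its far side in $G$, and there is no reason these faces coincide or bring the two glued edges within facial distance $2$. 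Hence a $2$-FEC $\varphi$ of $G'$ may give them the \emph{same} colour, in which case your lifted colouring $\psi$ already violates the $2$-facial condition on $\alpha$ and $\beta$ between edges you never uncolour, and the whole extension argument collapses. (Your phrase that the glued pairs ``carry one common colour and \dots another'' silently assumes the two classes get distinct colours, which is exactly what is not guaranteed.) Patching this by also uncolouring $w_1u_1=w_2u_2$ does not work cheaply: those are a single edge of $G'$ but two edges of $G$, so $H$ grows and the counts that make $uv$ free and give the outer $4$-cycle lists of size $2$ no longer hold.

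This is why the paper does something weaker but safe: it keeps the edge $uv$ deleted (not the vertices) and performs a \emph{single} identification of a pair that is not $2$-facially adjacent in $G$, namely $u_1w_1$ with $v_2w_2$, so the lifted colouring is a genuine partial $2$-FEC; it then uncolours all six edges whose $2$-facial neighbourhoods changed ($uu_1, v_1w_1, vv_1, vv_2, u_2w_2, uu_2$). The price is that only one colour coincidence is forced, so $uv$ gets $6$ available colours but the surrounding edges get only $2$ or $3$, and the core of $\ngph{H}$ need not be null; the paper therefore needs the finer list analysis (disjointness of lists of non-adjacent vertices, $|L(uu_1)\cap L(uu_2)|\ge 2$, and the final counting contradiction). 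Your instinct that two forced coincidences around $uv$ would trivialise the extension is understandable, but as it stands the surgery that produces them invalidates the lifted colouring, so the proof is not correct without a substantially different repair.
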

\begin{figure}[ht]
	$$
		\includegraphics{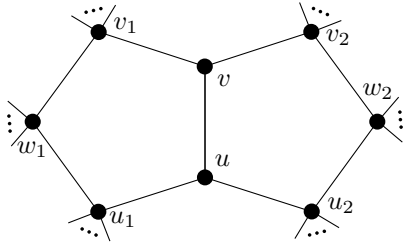}
	$$
	\caption{Adjacent $5$-faces in $G$ have at least one common $4^+$-vertex.}
	\label{fig:55face}
\end{figure}
\begin{proof}
	Let $\alpha=(u,v,v_1,w_1,u_1)$ and $\beta=(u,v,v_2,w_2,u_2)$ be two adjacent $5$-faces of $G$ where $d(u) = d(v) = 3$ (see Fig.~\ref{fig:55face}).
	Let $G'$ be the graph obtained from $G$ by removing the edge $uv$ and identifying the edges $u_1w_1$ and $v_2w_2$,
	where the vertex $u_1$ goes to $w_2$, and $w_1$ goes to $v_2$
	(similarly as in the previous arguments, it is easy to see that the edges $u_1w_1$ and $v_2w_2$ are not $2$-facially adjacent). 
	Let $\varphi$ be a $2$-FEC of $G'$ with at most $7$ colors, which induces an improper 
	$2$-FEC $\varphi'$ of $G$. Again, due to the changes of their $2$-facial neighborhoods we uncolor the edges $uu_1$, $v_1w_1$, $vv_1$, $vv_2$, $u_2w_2$, 
	and $uu_2$. 
	
	Notice that $u_1w_1$ and $v_2w_2$ are assigned the same color, say $g$. In what follows, we color the noncolored edges of $G$.	
	The edges $v_1w_1$, $u_2w_2$ have at most $5$ and the edges $uu_1$, $vv_1$, $vv_2$, and $uu_2$ have at most $4$ colored $2$-facial neighbors in $G$,
	the edge $uv$ has two, but they are assigned the same color. 
	
	Consider the graph $\ngph{H}$ (see Fig.~\ref{fig:lem55proof}), where $H$ is the set of noncolored edges. We will show that we can color its vertices.
	\begin{figure}[ht]
		$$
			\includegraphics{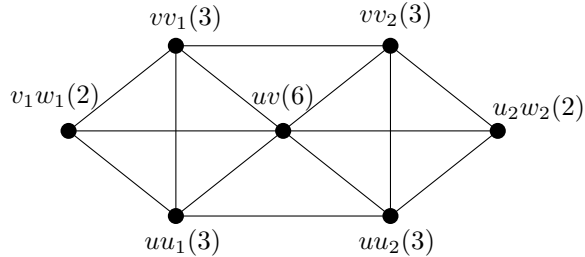}
		$$
		\caption{The $2$-medial subgraph induced by the noncolored edges. In the brackets the minimal numbers of available colors are given.}
		\label{fig:lem55proof}
	\end{figure}
	Observe that the lists of available colors of the noncolored edges together contain at most $6$ distinct colors, since each noncolored edge has a 
	$2$-facial neighbor of color $g$. Furthermore, all of these $6$ colors are available for $uv$. We consider the properties of these lists and show that 
	we can always extend the coloring to $G$. 
	
	First, notice that \textit{the lists of any two vertices that are not adjacent in $\ngph{H}$ are disjoint}. Otherwise, we may assume that two nonadjacent
	vertices $x$ and $y$ may receive the same color, say $a$, and we color them by $a$. 
	Observe that regardless of choice of $x$ and $y$, the sizes of lists of the remaining vertices may decrease by at most $1$, 	
	so the vertex $uv$ retains at least $5$ available colors and has four noncolored neighbors, which means that it does not appear in the core
	$\zeta(\ngph{H})$. Therefore, $\zeta(\ngph{H})$ is either the null graph or a $4$-cycle where every vertex has at least two available colors.
	Thus $\ngph{H}$ is colorable.
				
	Hence, we may assume that the lists of any two vertices that are not adjacent in $\ngph{H}$ are disjoint. Without loss of generality, let 
	$\set{a,b,c} \subseteq L(uu_1)$, $\set{d,e,f} \subseteq L(vv_2)$, and $\set{d,e} \subseteq L(v_2w_2)$. Consider the lists $L(uu_1)$ and $L(uu_2)$.	
	Both edges, $uu_1$ and $uu_2$, are $2$-facially adjacent to two common edges and to the edges of color $g$. It means that $|L(uu_1) \cap L(uu_2)| \ge 2$. 	
	Therefore, $|L(vv_2) \cup L(uu_2)| \ge 5$ and so $(L(vv_2) \cup L(uu_2)) \cap L(v_1w_1) \neq \emptyset$, a contradiction.
\end{proof}

\begin{lemma}
	\label{lem:43v}
	A $4$-face in $G$ is incident to at least one $4^+$-vertex.
\end{lemma}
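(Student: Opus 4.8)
The plan is to prove the lemma by showing that the configuration is reducible, in the spirit of the previous lemmas. Suppose, for a contradiction, that $\alpha=(v_1,v_2,v_3,v_4)$ is a $4$-face of $G$ all of whose vertices are $3^-$-vertices. Since $\delta(G)\ge 2$ and, by Lemma~\ref{lem:72v}, a $7^-$-face is incident to no $2$-vertex, each $v_i$ is in fact a $3$-vertex; let $u_i$ be the third neighbour of $v_i$. I would first record the forced local structure. By Lemma~\ref{lem:basic}$(iii)$, Lemma~\ref{lem:no6}, Lemma~\ref{lem:44} and Lemma~\ref{lem:45}, the face $\beta_i$ on the other side of the edge $v_iv_{i+1}$ (indices mod $4$, so $v_5=v_1$) has length at least $4$ but cannot equal $4$, $5$ or $6$ — in particular a $5$-face sharing $v_iv_{i+1}$ with $\alpha$ would force both $v_i$ and $v_{i+1}$ to be $4^+$-vertices — so \emph{every edge of $\alpha$ lies on a $7^+$-face}. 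I would also note that $u_1,\dots,u_4$ are pairwise distinct and disjoint from $V(\alpha)$, since any coincidence produces a $3^-$-face or a separating cycle of length at most $5$, contradicting Lemma~\ref{lem:basic}.

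For the reduction, write $\beta_1=(v_1,v_2,u_2,x_1,\dots,x_k,u_1)$, where $k\ge 3$ because $|\beta_1|\ge 7$. Form $G'$ from $G$ by deleting the edge $v_1v_2$ and then identifying the edge $v_1u_1$ with the edge $u_2x_1$ (merging $v_1$ with $u_2$ and $u_1$ with $x_1$, so that the two edges become one). Deleting $v_1v_2$ first is exactly what avoids the parallel edge that this identification would otherwise create at the common neighbour $v_2$ of $v_1$ and $u_2$; and since $v_1u_1$ and $u_2x_1$ are at facial distance $3$ on $\beta_1$ (and, as one checks, lie on no other common face), after the deletion they are no longer $2$-facial neighbours, which makes the identification legitimate. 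As usual one verifies that $G'$ has no loop, no multiple edge and no separating cycle of length at most $5$, all excluded by Lemmas~\ref{lem:2conn} and~\ref{lem:basic}. By the minimality of $G$, the graph $G'$ has a $2$-FEC $\varphi$ with at most $7$ colours.

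To extend $\varphi$ to $G$ I would argue by counting around $\alpha$. Let $\psi$ be the partial colouring of $G$ induced by $\varphi$ in which $v_1v_2$, together with a bounded number of edges incident to the merged vertices whose $2$-facial neighbourhood genuinely shrank under the identification (among them $v_2u_2$ and $u_1x_k$), are left uncoloured; all the remaining $2$-facial constraints of $G$ are then already satisfied by $\varphi$, and — this is the point of the identification — the two edges $v_1u_1$ and $u_2x_1$ now carry one and the same colour. Let $H$ denote the uncoloured set. Since $v_2v_3$, $v_3v_4$ and $v_4v_1$ use three distinct colours while the four $2$-facial neighbours of $v_1v_2$ on $\beta_1$ now use at most three colours, $v_1v_2$ has at most six forbidden colours, hence an available one; an analogous count for the other edges of $H$ (which, being incident to $7^+$-faces, start with many available colours and gain further slack from the edges put into $H$) shows that $\ngph{H}$ is small and its core $\zeta(\ngph{H})$ is the null graph. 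By Theorem~\ref{thm:lstbrooks}, $\ngph{H}$ is then colourable, so $\varphi$ extends to a $2$-FEC of $G$ with at most $7$ colours — the desired contradiction.

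The step I expect to be the main obstacle, and the reason the naive reductions (simply deleting $E(\alpha)$, or identifying two opposite vertices of $\alpha$) fail, is that the four edges of $\alpha$ are pairwise at facial distance at most $2$ on $\alpha$: they induce a $K_4$ in the $2$-medial graph and so demand four freely available colours out of seven, whereas each of the four incident $7^+$-faces already forces up to four colours on the edges of $\alpha$. The whole weight of the proof lies in choosing the identification on $\beta_1$ so that it saves precisely the one missing colour, and in the attendant bookkeeping: pinning down exactly which edges must be moved into $H$ (this depends on whether the merged vertices have degree $3$, in which case the constraints lost to the identification are automatically reinstated, or larger, in which case one more edge joins $H$) and clearing away the degenerate coincidences among $u_1,\dots,u_4$ by means of Lemmas~\ref{lem:2conn} and~\ref{lem:basic}.
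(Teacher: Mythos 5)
Your reduction is not carried out at the point where all the difficulty lies, and as described it in fact breaks down. Write $\beta_1=(v_1,v_2,u_2,x_1,\dots,x_k,u_1)$ and let $e^*$ denote the glued edge $v_1u_1\equiv u_2x_1$. After the identification the face $\beta_1$ disappears, so \emph{every} pair of edges of $\beta_1$ at facial distance at most $2$ in $G$ must either stay constrained in $G'$ or involve an uncoloured edge. The pairs $(u_2x_1,x_1x_2)$, $(u_2x_1,x_2x_3)$, $(u_1v_1,x_ku_1)$ and $(u_1v_1,x_{k-1}x_k)$ are all lost, because $e^*$ lies only on the faces that were opposite $v_1u_1$ and $u_2x_1$, not on the new faces carrying $x_1x_2,\dots,x_ku_1$. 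Hence not only $v_2u_2$ and $u_1x_k$ but also $x_1x_2$, $x_2x_3$, $x_{k-1}x_k$, $x_ku_1$ must be uncoloured. Now count for $x_2x_3$ (say $k\ge 6$): its $2$-facial neighbours in $G$ include four coloured edges on its second face plus the coloured edges $e^*$, $x_3x_4$, $x_4x_5$ on the $\beta_1$ side — up to seven distinct colours, i.e.\ possibly no available colour at all. So the core $\core{\ngph{H}}$ is not the null graph, Theorem~\ref{thm:lstbrooks} does not apply in the way you claim, and the sentence ``an analogous count \dots shows that the core is the null graph'' is exactly the step that fails. Your closing paragraph concedes that ``the whole weight of the proof lies in \dots the attendant bookkeeping,'' but that bookkeeping is the proof; what you have written is a plan with the crucial verification missing, and the most natural way of filling it in does not go through.

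Moreover, the naive reduction you dismiss is essentially the one the paper uses and makes work. The actual proof deletes the four $3$-vertices $v_1,\dots,v_4$, colours $G'=G-\set{v_1,v_2,v_3,v_4}$ by minimality, and then colours the eight edges incident to the $v_i$ by analysing $\ngph{H}$ directly: the pendant edges $u_iv_i$ have at least $3$ available colours, the edges of $\alpha$ at least $5$, and since lists of sizes $3$ and $5$ against degrees $4$ and $5$ do not let Theorem~\ref{thm:lstbrooks} apply immediately, the paper argues by cases — if $L(u_1v_1)\cap L(u_3v_3)\neq\emptyset$ one colours those two edges alike and the rest follows from Theorem~\ref{thm:lstbrooks}; otherwise the opposite pendant lists are disjoint, which yields two colours $b,c$ for $u_2v_2,u_4v_4$ each clashing with only one of $u_1v_1,u_3v_3$, reducing to the previous situation. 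That list-theoretic case analysis is the missing idea in your proposal: your $K_4$ objection (four edges of $\alpha$ needing four colours out of seven) is precisely what this analysis overcomes, so the configuration is reducible without any edge identification.
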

\begin{proof}
	Let $\alpha=(v_1,v_2,v_3,v_4)$ be a $4$-face such that $d(v_i)=3$, for $i\in \set{1,2,3,4}$ and let $u_i$ be the third
	neighbor of $v_i$. By Lemmas~\ref{lem:44} and~\ref{lem:45}, it follows that $\alpha$ is adjacent only to $7^+$-faces.
	Let $G' = G - \set{v_1,v_2,v_3,v_4}$ and $\varphi$ be a $2$-FEC coloring of $G'$ with at most $7$ colors. In order to extend $\varphi$
	to $G$, we need to color the $8$ edges incident to the vertices $v_i$. Let $H$ be the set of these edges and consider the graph
	$\ngph{H}$ (see Fig.~\ref{fig:lem43proof}). The $4$-vertices have at least $3$ available colors, while the $5$-vertices have at least $5$. 
	\begin{figure}[ht]
		$$
			\includegraphics{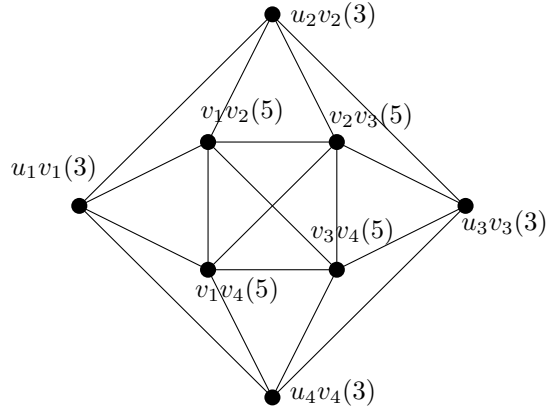}
		$$
		\caption{The $2$-medial subgraph induced by the noncolored edges. In the brackets the minimal numbers of available colors are given.}
		\label{fig:lem43proof}
	\end{figure}
	
	Consider the properties of lists of available colors of the $4$-vertices. Suppose first that there is some color, say $a$, available for
	the vertices $u_1v_1$ and $u_3v_3$. Then, we color both vertices by $a$. Notice that the sizes of lists of available colors of the
	remaining vertices decrease by at most $1$. Thus, the remaining vertices form a graph that is colorable by Theorem~\ref{thm:lstbrooks}.
	Hence, we may assume that \textit{the lists $L(u_1v_1)$ and $L(u_3v_3)$ are disjoint}. By the symmetry, we also have that 
	\textit{$L(u_2v_2)$ and $L(u_4v_4)$ are disjoint}. 
	
	Hence, there is a color $b$ in $L(u_2v_2)$ which is not available for $u_1v_1$ or $u_3v_3$, say $u_1v_1$, and
	there is a color $c$ available for $u_4v_4$, which is not available for $u_3v_3$.
	Therefore, after coloring $u_2v_2$ by $b$ and $u_4v_4$ by $c$, the lists of available colors of the remaining $6$ vertices decrease by at most
	one, and the vertices comprise a graph isomorphic to the graph from the previous paragraph, which is colorable. Hence, the coloring can
	be extended to $G$, a contradiction.
\end{proof}

\subsection{Discharging}

In this part we show that the graph $G$ with the structural properties described in the previous part cannot exist.
By $n_k(\alpha)$ we denote the number of $k$-vertices incident to the face $\alpha$,  and by $l(\alpha)$ the length of a face $\alpha$.
Now, we assign charges to the vertices and faces of $G$ as follows:
\begin{itemize}
	\item{} $\ch{v} = 5 d(v) - 14$, for every vertex $v$ of $G$;
	\item{} $\ch{\alpha} = 2 l(\alpha) - 14$, for every face $\alpha$ of $G$.
\end{itemize}
By Euler's formula, we have that the total sum of all charges is
$$
	\sum_{v \in V(G)} \ch{v} +  \sum_{\alpha \in F(G)} \ch{\alpha} = -28\,.
$$
In order to show that a minimal counterexample $G$ does not exist, we redistribute the charges among the vertices and faces using 
the following rules:
\begin{itemize}
	\item[\textbf{R1}]\quad Every $4^+$-vertex $v$ sends $\frac{\ch{v}}{d(v)}$ to every incident $5^-$-face.
	\item[\textbf{R2}]\quad Every $4^+$-vertex $v$ sends additional $\frac{\ch{v}}{2 \, d(v)}$ to an incident $5^-$-face $\alpha$ 
		along each edge incident to $v$, $\alpha$ and a $7^+$-face.
    \item[\textbf{R3}]
    \begin{itemize}
       \item[(i)]\quad Every $3$-vertex incident to two $7^+$-faces sends $1$ to an incident $5^-$-face.
       \item[(ii)]\quad Every $3$-vertex incident to one $7^+$-face sends $\frac{1}{2}$ to every incident $5$-face.
       \item[(iii)]\quad Every $3$-vertex incident only to $5$-faces sends $\frac{1}{3}$ to every incident $5$-face.
    \end{itemize}
	\item[\textbf{R4}]\quad Every $8^+$-face $\alpha$ sends $\frac{\ch{\alpha}}{n_2(\alpha)}\ge2$ to every incident $2$-vertex.
\end{itemize}
Now, we are ready to prove Theorem~\ref{thm:main}.

\begin{proof}[Proof of Theorem~\ref{thm:main}.]
We prove that after applying the discharging rules the final charge $\chfin{x}$ of every $x \in V (G) \cup F(G)$ is nonnegative.
First, we compute the final charges of the faces. By Lemma~\ref{lem:basic}, there are only faces of size at least $4$ in $G$. Moreover,
there are no $6$-faces by Lemma~\ref{lem:no6}. Notice also that only $8^+$-faces may send charge by R4, however, they send only the positive
portions and thus retain nonnegative charges. 

Hence, only $4$- and $5$-faces have negative initial charges. We consider them separately.
\begin{itemize}
\item{} \textit{Let $\alpha=(v_1,v_2,v_3,v_4)$ be a $4$-face.} By Lemma~\ref{lem:72v}, $\alpha$ is incident only to $3^+$-vertices. Moreover, by Lemma~\ref{lem:43v},
at least one of its neighbors is of degree at least $4$. If $n_3(\alpha) = 0$, $\alpha$ receives at least $\frac{3}{2}$ from each neighbor by the rule R1,
so its final charge is nonnegative. In case when $n_3(\alpha) = 1$, let $d(v_1)=3$. By Lemmas~\ref{lem:44} and~\ref{lem:45}, the other two faces, the vertex
$v_1$ is incident to, are $7^+$-faces. Hence, the vertices $v_2$ and $v_4$ send at least $ \frac{3}{2}+\frac{3}{4}$ by R1 and R2, $v_3$
sends at least $\frac{3}{2}$ by R1, and $v_1$ sends $1$ to $\alpha$ by R3. Thus, $\chfin{\alpha} \ge -6 + 2(\tfrac{3}{2} + \tfrac{3}{4}) + \tfrac{3}{2} + 1 = 1$.

Suppose now that $n_3(\alpha) = 2$. The $3$-vertices incident to $\alpha$ may share an edge of $\alpha$ or have the facial distance $2$ on the boundary of $\alpha$. 
In both cases $3$-vertices are incident to $7^+$-faces by Lemma~\ref{lem:45}. First, suppose $d(v_1) = d(v_2) = 3$. Hence, the vertices 
$v_3$ and $v_4$ send at least $\frac{3}{2}+\frac{3}{4}$ by R1 and R2, $v_1$ and $v_2$ send $1$ by R3, so the final charge is at least $\frac{1}{2}$. 
Second, suppose that $v_1$, $v_3$ are $3$-vertices. Then $v_2$ and $v_4$ send at least $\frac{3}{2}+ 2\cdot\frac{3}{4}$ by R1 and R2, and
$v_1$ and $v_3$ send $1$ by R3 to $\alpha$, so $\chfin{\alpha} \ge -6 + 2(\tfrac{3}{2} + 2\cdot\tfrac{3}{4}) + 2 \cdot 1 = 2$.

Finally, suppose that $n_3(\alpha) = 3$. Then, $\alpha$ is adjacent only to $7^+$-faces. Let $v_1$, $v_2$, and $v_3$ be the $3$-vertices. Each of them sends 
$1$ by R3 to $\alpha$ and $v_4$ sends at least $\frac{3}{2}+2\cdot\frac{3}{4}$ by R1 and R2. Hence, the final charge of $\alpha$ is positive.

\item{} \textit{Let $\alpha=(v_1,v_2,v_3,v_4,v_5)$ be a $5$-face.} By Lemma~\ref{lem:72v}, $\alpha$ is incident only to $3^+$-vertices. 
If $n_3(\alpha) \le 2$, $\alpha$ receives at least $3\cdot\frac{3}{2}$ from incident $4^+$-vertices, hence $\chfin{\alpha} > 0$.

Suppose now that $n_3(\alpha) = 3$.  In case when all three $3$-vertices are consecutive on $\alpha$, say $d(v_1) = d(v_2) = d(v_3) = 3$,
$v_2$ is incident to two $7^+$-faces by Lemma~\ref{lem:55}. 
Then, $v_1$ and $v_3$ send at least $\frac{1}{2}$ by R3, $v_2$ sends $1$ by R3 and $v_4$, $v_5$ send at least 
$\frac{3}{2}$ by R1 and R2 to $\alpha$. Hence, $\chfin{\alpha} \ge -4 + 2 \cdot\frac{1}{2} + 1 + 2 \cdot \frac{3}{2} = 1$.

In the second case, one of the $3$-vertices has two $4^+$-neighbors on the boundary of $\alpha$, so we may assume that $d(v_1) = d(v_2) = d(v_4) = 3$. 
Then, $v_1$ and $v_2$ send at least $\frac{1}{2}$ by R3, $v_3$ and $v_5$ send at least $\frac{3}{2}$ by R1 and R2 and 
$v_4$ sends at least $\frac{1}{3}$ by R3 to $\alpha$. So, the final charge of $\alpha$ is at least $\frac{1}{3}$.

Next, let $n_3(\alpha) = 4$ and, say, $d(v_5) \ge 4$. By Lemmas~\ref{lem:45} and~\ref{lem:55}, two faces incident to $v_2$ and $v_3$ 
are of size at least $7$. Then, $v_1$ and $v_4$ send at least $\frac{1}{2}$ by R3, $v_2$ and $v_3$ send $1$ by R3, 
and $v_5$ sends at least $\frac{3}{2}$ to $\alpha$ by R1 and R2. The final charge of $\alpha$ is at least $\frac{1}{2}$.

In case when $n_3(\alpha) = 5$, $\alpha$ is adjacent only to $7^+$-faces, by Lemmas~\ref{lem:45} and~\ref{lem:55}. Each vertex incident to $\alpha$ 
sends $1$ by R3, therefore the final charge of $\alpha$ is $1$.

\end{itemize}
Hence, all the faces have nonnegative final charge. It remains to consider the vertices. After applying the rules, the charge of 
the $3^+$-vertices remains nonnegative, since they redistribute only the positive portions of their charges. So, we consider only the
$2$-vertices.

Let $v$ be a $2$-vertex incident to faces $\alpha$ and $\beta$. By Lemmas~\ref{lem:72v} and~\ref{lem:2verts}, $\alpha$ and $\beta$ are
$8^+$-faces. By R4, each of them sends at least $2$ of charge to $v$, so $v$ has nonnegative final charge. It follows that all the vertices and
faces of $G$ have nonnegative final charge, a contradiction. This establishes Theorem~\ref{thm:main}.
\end{proof}

\paragraph{Acknowledgement.} The authors would like to thank S. Jendro\v{l} who introduced the problem to P. \v{S}ugerek. 

\bibliographystyle{acm}
{\small
	\bibliography{MainBase}

\begin{thebibliography}{1}

\bibitem{Bor77}
{\sc Borodin, O.~V.}
\newblock Criterion of chromaticity of a degree prescription.
\newblock In {\em Abstracts of IV All-Union Conf. on Theoretical Cybernetics\/}
  (1977), pp.~127--128.

\bibitem{ErdRubTay80}
{\sc Erd\H{o}s, P., Rubin, A.~L., and Taylor, H.}
\newblock Choosability in graphs.
\newblock In {\em West Coast Conference on Combinatorics, Graph Theory and
  Computing\/} (1980), pp.~125--157.

\bibitem{HavKraSerSkr10}
{\sc Havet, F., Kr\'{a}l', D., Sereni, J.-S., and \v{S}krekovski, R.}
\newblock Facial colorings using {H}all's theorem.
\newblock {\em European J. Combin. 31}, 3 (2010), 1001--1019.

\bibitem{HavSerSkr08}
{\sc Havet, F., Sereni, J.-S., and \v{S}krekovski, R.}
\newblock 3-facial coloring of plane graphs.
\newblock {\em SIAM J. Discrete Math. 22}, 1 (2008), 231--247.

\bibitem{KraMadSkr05}
{\sc Kr\'{a}l', D., Madaras, T., and \v{S}krekovski, R.}
\newblock Cyclic, diagonal and facial colorings.
\newblock {\em European J. Combin. 26}, 3--4 (2005), 473--490.

\bibitem{KraMadSkr07}
{\sc Kr\'{a}l', D., Madaras, T., and \v{S}krekovski, R.}
\newblock Cyclic, diagonal and facial colorings--a missing case.
\newblock {\em European J. Combin. 28}, 6 (2007), 1637--1639.

\bibitem{MonRas06}
{\sc Montassier, M., and Raspaud, A.}
\newblock A note on 2-facial coloring of plane graphs.
\newblock {\em Inform. Process. Lett. 98}, 6 (2006), 235--241.

\bibitem{Weg77}
{\sc Wegner, G.}
\newblock Graphs with given diameter and a coloring problem.
\newblock {\em Technical report, University of Dortmund, Germany\/} (1977).

\end{thebibliography}
}

\end{document}